\DeclarePairedDelimiter{\group}{(}{)}
\DeclarePairedDelimiter{\vset}{\{}{\}}
\DeclarePairedDelimiter{\norm}{\Vert}{\Vert}
\DeclarePairedDelimiter{\abs}{\vert}{\vert}
\newcommand{\cset}[3][]{\vset[#1]{#2\colon#3}}
\newcommand{\reals}{\mathbb{R}}
\newcommand{\extendedreals}{\overline{\mathbb{R}}\vphantom{\reals}}
\newcommand{\naturals}{\mathbb{N}}
\newcommand{\Pee}{\mathscr{P}}
\newcommand{\FF}{\mathscr{F}}
\newcommand{\E}{\mathrm{E}}
\newcommand{\Eo}{\underline{\E}}
\newcommand{\Eoh}{\hat{\Eo}\vphantom{E}}
\newcommand{\Eu}{\overline{\E}}
\DeclareMathOperator{\supp}{supp}
\newcommand{\dx}{\mathrm{d}x}
\newcommand{\ifandonlyif}{\Leftrightarrow}
\newcommand{\borels}{\mathscr{B}}
\newcommand{\muu}{\underline{\mu}}
\newcommand{\sigmu}{\underline{\sigma}}
\newcommand{\muo}{\overline{\mu}}
\newcommand{\sigmo}{\overline{\sigma}}
\newcommand{\asconvergesto}{\xrightarrow{\text{a.s.}}}
\DeclareMathOperator{\id}{\mathrm{id}}
\newenvironment{proofof}[1]{\par\noindent{\bfseries\upshape Proof of #1\ }}{\jmlrQED}
\title[Monte Carlo Estimation for Imprecise Probabilities: Basic Properties]{
  Monte Carlo Estimation for Imprecise Probabilities:\\ 
  Basic Properties
}
\author{
  \Name{Arne {Decadt}}\Email{arne.decadt@ugent.be}\\ 
  \Name{Gert {de Cooman}}\Email{gert.decooman@ugent.be}\\
  \Name{Jasper {De Bock}}\Email{jasper.debock@ugent.be}\\
  \addr ELIS -- FL\hspace{.055555em}ip, Ghent University, Belgium
}
\begin{document}
\maketitle

\begin{abstract}
We describe Monte Carlo methods for estimating lower envelopes of expectations of real random variables.
We prove that the estimation bias is negative and that its absolute value shrinks with increasing sample size. 
We discuss fairly practical techniques for proving strong consistency of the estimators and use these to prove the consistency of an example in the literature.
We also provide an example where there is no consistency.
\end{abstract}
\begin{keywords}
Monte Carlo simulation, 
Imprecise probabilities, 
Bias,
Consistency, 
Lower expectation operator, 
Estimation 
\end{keywords}

\section{Introduction}\label{sec:intro}
Monte Carlo simulation is a technique widely used for obtaining numerical estimates for in principle anything that can be represented as a sum or an integral.
Multiple research fields have proposed to extend the Monte Carlo method towards solving optimisation problems. 
Both the fields of stochastic optimisation and empirical processes have for example studied the properties of such generalised Monte Carlo estimators, including their bias \cite{Bayraksan2006}, their consistency and a version of the central limit theorem \cite{shapiro1991asymptotic,van1996weak} for them. 
In the field of imprecise probabilities, Monte Carlo based methods have also received considerable attention~\cite{faes2018failure,fetz2016imprecise,lazar2003exploring,oberguggenberger2009classical,schobi2017structural,Troffaes2018,zhang2012interval,zhang2018quantification}, but with the exception of the discussion in~\cite{Troffaes2018}, the theoretical properties of the corresponding estimators have not been investigated in much detail. We here address this by studying the bias and consistency of a large class of imprecise Monte-Carlo based estimators, including the important special case of imprecise Monte-Carlo estimators that are based on importance sampling~\cite{fetz2016imprecise,Troffaes2018,zhang2012interval,zhang2018quantification}.


The optimisation or extremum problem we typically encounter in the imprecise probabilities field is calculating the lower expectation \(\Eo(f(X))\)---or the upper expectation \(\Eu(f(X))=-\Eo(-f(X))\)---of a Borel measurable map~\(f\) of a real random variable \(X\), whose distribution \(P\) is only known to belong to a set \(\Pee\).\footnote{More generally, \(X\) could be a real random vector, and~\(f\) a vector-valued map, but we will only consider the scalar case here.}
To find this lower expectation 
\begin{equation*}
\Eo(f(X))=\inf_{P\in\Pee}\E^P(f),
\end{equation*}
we have to minimise the expected value \(\E^P(f)\)---a sum or an integral that would typically be estimated using Monte Carlo simulation---over a set of distributions \(\Pee\).
A fairly straightforward---but naive---method for estimating this infimum consists in choosing a finite subset \(\Pee'\) of \(\Pee\), taking independent samples for each probability measure \(P\in\Pee'\), using them to get to a Monte Carlo estimate for \(\E^P(f)\), and finally finding the minimum of the Monte Carlo estimates.
The problem with this method is that the larger the set of probabilities~\(\Pee'\), the larger the bias it will tend to produce, as we argue in Section~\ref{sec:imprecise:monte:carlo}.

To reduce this bias, we would typically want to introduce correlation between the estimates of \(\E^P(f)\) for different \(P\in\Pee\).
One way to do this is to sample from a single distribution only, and to use these samples to calculate the corresponding samples for the distributions in the set \(\Pee\).
We will therefore in this paper study estimators of the form
\begin{equation}\label{eq:standardform:1}
\Eoh_n^{\Pee}(f)
=\inf_{t\in T}\frac1n\sum_{k=1}^{n}f_t(X^P_k),
\end{equation}
where we take independent samples \(X^P_k\) from a single distribution \(P\) and transform the results into samples for other distributions, parametrised by a set \(T\)---this transformation, as we shall see further on, accounts for the use of the maps~\(f_t\) rather than the map~\(f\) in the estimator~\eqref{eq:standardform:1}.
A word on terminology is in order here.
We will be careful in this paper to distinguish between functions and maps on some domain; maps are always defined on all of the domain, whereas functions may, but need not be, defined only on a part of the domain.

The decomposition of the problem into a classical random variable \(X^P\) and a parametric part \(T\) is essential for our results, and is also present in the literature \cite{alvarez2017tighter,Troffaes2018}.
We show how to prove consistency for this fairly general type of estimators, and we give sufficient conditions for consistency that are relatively easy to check.

The plan of the paper is as follows.
First, in Section~\ref{sec:monte:carlo:estimation}, we describe the theoretical setting of the discussion, and explain in some detail why we use estimators of the general form~\eqref{eq:standardform:1}.
We discuss two particular methods for generating samples of sufficient generality, where the resulting estimator has this general form.
In Section~\ref{sec:estimator:properties} we study a number of statistical properties of estimators of type~\eqref{eq:standardform:1}: their bias and consistency.
To show that the results we derive are practically useful, we include a discussion of two examples, in Section~\ref{sec:examples}.
In the first example, we show how to actually prove the consistency of an estimator encountered in the literature~\cite{fetz2016imprecise}.
The second example is a case where there is no consistency, provably.
In the Conclusion, we highlight our main findings, and provide pointers to future work.

\iftoggle{extendedversion}
{In order not to interrupt the flow of the discussion too much, we have moved all proofs to the Appendix.}
{Proofs of our results are available in an extended arXiv version \cite{extpaper}.}

\section{Monte Carlo Estimation}\label{sec:monte:carlo:estimation}

\subsection{Notation and Theoretical Setting}\label{sec:notation:and:setting}
Let us begin by sketching the theoretical background, and fixing the basic notation.

We consider the measurable space \((\reals,\borels)\), where \(\borels\) is the Borel \(\sigma\)-field on the set \(\reals\) of all reals.
For any given probability measure \(P\) on \((\reals,\borels)\), we denote by \(X^P\coloneqq\id\)---the identity map on \(\reals\)---the corresponding real random variable on the probability space \((\reals,\borels,P)\), and we say that \(X^P\) has distribution~\(P\). 
The real random variable \(X^P\) has a right-continuous \emph{cdf} (cumulative distribution function) \(F\colon\reals\to[0,1]\), defined by \(F(x)\coloneqq P(X\leq x)=P((-\infty,x])\) for all \(x\in\reals\). 
\(X^P\) will be called continuous if its distribution has a density---is absolutely continuous with respect to the Lebesgue measure on the measurable space \((\reals,\borels)\).

For the purposes of sampling, the single probability space \((\reals,\borels,P)\) is extended to the canonical (independent) product space \((\reals^\infty,\borels^\infty,P^\infty)\), as for instance described in \cite{dudley_2002,van1996weak}. 
We now identify the countable infinity of real random variables \(X^P_k\) on \((\reals^\infty,\borels^\infty,P^\infty)\), each of which is the projection map from the Cartesian product \(\reals^\infty\) to its \(k\)-th component space \(\reals\).
The infinite sequence \((X^P_1,X^P_2,\dots)\) of real random variables \(X^P_k\) is then also a random variable on the probability space \((\reals^\infty,\borels^\infty,P^\infty)\), and is also denoted by \(X^P_{1:\infty}\); a finite subsequence \((X^P_1,\dots,X^P_n)\) is denoted by \(X^P_{1:n}\).
We denote by \(\E^\infty\) the expectation operator associated with this product probability space.

We also consider a parameter set \(T\) that parametrises a set of probability measures \(\Pee\coloneqq\cset{P_t}{t\in T}\) such that each~\(P_t\) leads to a probability space \((\reals,\borels,P_t)\) with generic real random variables \(X^{P_t}\). 
The corresponding probability space for its sequences of samples is \((\reals^\infty,\borels^\infty,P_t^\infty)\).
The product space for all these sequences together, so the product of all the \((\reals^\infty,\borels^\infty,P_t^\infty)\) over all $t\in T$, is denoted by \((\reals_T^\infty,\borels_T^\infty,P_T^\infty)\), with associated expectation operator~$\E_T^\infty$.

\subsection{Imprecise Monte Carlo Estimation}\label{sec:imprecise:monte:carlo}
As mentioned in the Introduction, in a so-called imprecise Monte Carlo simulation we want to estimate the lower expectation \(\Eo(f(X))=\inf_{P\in\Pee}\E^P(f)\) of some Borel measurable real-valued map~\(f\) of a real random variable~\(X\), whose distribution~\(P\) is only known to belong to a set~\(\Pee\).

To get some grip on this problem, let us first look at how we would typically proceed in the so-called classical case, where \(\Pee=\{P\}\) is a singleton.

We now consider the real random variables \(X^P_k\).
We also consider an `\(P\)-integrable' Borel measurable map \(f\colon\reals\to\reals\), where `\emph{\(P\)-integrable}' means that \(\E^P(\abs{f})<+\infty\). 
We define the sample mean estimator \(\hat{\E}_n^P\) for the expectation \(\E^P(f)\) as the real random variable
\begin{equation*}
\hat{\E}_n^P(f)
\coloneqq\frac1n\sum_{k=1}^{n}f(X_k^{P}).
\end{equation*}
This real random variable on \((\reals^\infty,\borels^\infty,P^\infty)\) is clearly a map of the real random variables \(X_1^{P},\ldots,X_n^P\), but we will suppress their appearance in the notation for \(\hat{\E}_n^P(f)\).
It is a classical result that \(\hat{\E}_n^P\) is an unbiased estimator for \(\E^P\group{f}\), meaning that \(\E^{\infty}\group[\big]{\hat{\E}_n^P\group{f}}=\E^P\group{f}\).

In an imprecise probabilities setting, we consider a set of probability measures \(\Pee\) on the measurable space \((\reals,\borels)\).
As explained in \cref{sec:notation:and:setting}, we will typically use a set \(T\) to index the probability measures in \(\Pee\), so \(\Pee=\cset{P_t}{t\in T}\).
Here too, the Borel measurable map \(f\colon\reals\to\reals\) will be assumed throughout to be `\emph{\(\Pee\)-integrable}', which we from now on take to mean that $\Eu\vphantom{\E}^{\Pee}(\abs{f})\coloneqq\sup_{t\in T}\E^{P_t}(\abs{f})<+\infty$.
It is easy to see that this implies that \(f\) is then \(P\)-integrable for every \(P\in\Pee\), and that
\begin{equation*}
 -\infty
 <\Eo^{\Pee}(-\abs{f})
 \leq\Eo^{\Pee}(f)
 \leq\Eu\vphantom{\E}^{\Pee}(f)
 \leq\Eu\vphantom{\E}^{\Pee}(\abs{f})
 <+\infty.
\end{equation*}
In a naive first attempt, one might consider the following estimator for \(\Eo^{\Pee}(f)\):
\begin{equation}\label{eq:deflowprev}
\Eoh_n^{\Pee,\mathrm{n}}(f)
\coloneqq\inf_{t\in T}\hat{\E}_n^{P_t}(f)
=\inf_{t\in T}\frac1n\sum_{k=1}^{n}f(X_k^{P_t}).
\end{equation}
This is a(n extended) real map on a suitably defined product space, such as for instance~\((\reals_T^\infty,\borels_T^\infty,P_T^\infty)\).
In practice, however, this type of estimator has a number of undesirable properties.
First of all, it is computationally inefficient.
Since all currently available sampling methods sample with respect to a single distribution, taking an infimum would mean that we need to take samples for each distribution in at least a representative enough part of \(\Pee\).
Suppose we choose \(m\) probability measures in \(\Pee\), for each of which we take a sample of size \(n\), then we have to take \(mn\) samples in total.
If we increase the number of sampled distributions in \(\Pee\) by one, say, to get a better approximation, then the number of samples to take will increase by~\(n\).

Secondly, adding new distributions in \(\Pee\) to the sampling procedure---in an attempt to get to a better approximation---will not only increase the computational cost, but it can produce the adverse effect of increasing the absolute value of the bias of the resulting estimator~\eqref{eq:deflowprev}. 
To see why this is so, we give an example for a simplified case.
Consider a set of probability measures \(\Pee=\cset{P_t}{t\in T}\) that all associate the same expectation with a given map~\(f\), so we can use \(\E^P(f)\) to denote this common expectation: \(\E^{P}(f)=\E^{P_{t}}(f)\) for all~\(t\in T\).
Take two finite disjoint subsets \(H\) and \(\Delta H\) of \(T\) and let us consider the approximations---with obvious notations---\(\Eoh_n^{H,\mathrm{n}}(f)\) and \(\Eoh_n^{H\cup\Delta H,\mathrm{n}}(f)\) of \(\Eoh_n^{\Pee,\mathrm{n}}(f)\), where we clearly expect, conceptually, \(\Eoh_n^{H\cup\Delta H,\mathrm{n}}(f)\) to be the better approximation.
However, this turns out not to be the case, since, again with obvious notations,
\begin{equation*}
\Eoh_n^{H\cup\Delta H,\mathrm{n}}(f)
=\min\vset[\Big]{\Eoh_n^{H,\mathrm{n}}(f),\Eoh_n^{\Delta H,\mathrm{n}}(f)},
\end{equation*}
so it follows that\footnote{The expectation \(\E_T^\infty\) is taken in a suitable product space, such as \((\reals_T^\infty,\borels_T^\infty,P_T^\infty)\).}
\begin{equation*}
\E_T^\infty\group[\Big]{\Eoh_n^{H\cup\Delta H,\mathrm{n}}(f)}
\leqslant
\E_T^\infty\group[\Big]{\Eoh_n^{H,\mathrm{n}}(f)}
\end{equation*}
and, with a similar argument,
\begin{align*}
\E_T^\infty\group[\Big]{\Eoh_n^{H,\mathrm{n}}(f)}
\leqslant\inf_{t\in H}\E_T^\infty\group[\Big]{\hat{\E}_n^{P_t}(f)}
=\E^P(f)=\Eo^{\Pee}\group{f}.
\end{align*}
We see that the bias of our estimator, which is already non-positive, can only become more negative.

The proposed solution to the computational efficiency problem consists in sampling from a single distribution \(P\) only, and then to reuse the samples \((X^P_1,\dots,X^P_n)\) in some way, and to transform them into the desired samples for the distributions \(P_t,t\in T\).
This will typically also produce correlation between the estimators \(\hat{\E}^{P_t}_n(f)\) for different \(t\in T\), and so hopefully reduce the bias problem as well.

A first way to achieve this is the method of \emph{inverse transform sampling}, introduced in \cref{sec:inverse:transform:sampling}, where we take samples from a uniform distribution on the open real unit interval~\((0,1)\), then use the quantile functions of the distributions \(P_t,t\in T\) to transform these samples into samples for the real random variables~\(X^{P_t}\), and use these transformed samples to approximate the lower expectation. 
A second implementation of this same idea is \emph{importance sampling}, discussed in \cref{sec:importance:sampling}, and used often when the real random variables are continuous.
Here, the reuse is implemented by using weight factors, dependent on the \(P_t\), to rescale the terms in the sample mean.

Interestingly, the exact form of the estimator does not matter much for the theoretical analysis further on, as long as there is a sample \(X^P_{1:\infty}\) taken from some probability measure \(P\), and Borel measurable maps \(f_t\colon\reals\to\reals\) such that \(\E^{P_t}(f)=\E^P(f_t)\) and also \(\E^{P_t}(\abs{f})=\E^P(\abs{f_t})\), for every \(t\in T\).
Since it follows from its assumed \(\Pee\)-integrability that $f$ is also $P_t$-integrable for all $t\in T$, this implies that $f_t$ is $P$-integrable for all $t\in T$.
We will show in the following sections how to find these measurable maps~\(f_t\) in both of the above-mentioned cases.

The estimator for the lower expectation can then be written as
\begin{equation}\label{eq:standardform}
\Eoh_n^{\Pee}(f)
\coloneqq
\inf_{t\in T}\frac1n \sum_{k=1}^{n} f_t(X^P_k)
=\inf_{t\in T}\hat{\E}_n^{P}(f_t).
\end{equation}
This estimator is an extended real-valued map on \(\reals^{\infty}\) that, due to the presence of the infimum, is not necessarily measurable with respect to the \(\sigma\)-field \(\borels^{\infty}\). 
If \(T\) is a compact subset of $\reals^n$ and the maps~\(f_t\) are continuous in \(t\), measurability will still be guaranteed.
This is the case considered by Troffaes in~\cite{Troffaes2018}.
One of the aims of the present paper is to go beyond this special case, and to allow for the fact that \(\Eoh_n^{\Pee}(f)\) need not be measurable in general.
In order to deal with this complication mathematically, we need to extend the expectation operator \(\E^{\infty}\) associated with the probability measure \(P^{\infty}\)---and defined on the measurable maps---to its (inner and) outer expectation(s) \cite{van1996weak} defined on arbitrary extended real-valued maps $h$:\footnote{See also \cite{koenig1997,troffaes2013:lp} for similar ideas.}
\begin{equation*}
\begin{aligned}
&\Eu^{\infty}(h)\coloneqq\\
&\inf\cset{\E^{\infty}(g)}
{h\leqslant g\in\extendedreals^{\reals^\infty}\text{ measurable and }\E^{\infty}(g)\text{ exists}},
\end{aligned}
\end{equation*}
where \(\extendedreals^{\reals^\infty}\) denotes the set of all maps from \(\reals^\infty\) to \(\extendedreals\), where `\(\E^{\infty}(g)\) exists' is taken to mean that at least one of \(\E^{\infty}(g^+)\) and \(\E^{\infty}(g^-)\) is finite, and where we introduced the notations \(g^+\coloneqq\max\vset{g,0}\) and \(g^-\coloneqq\max\vset{-g,0}\).
The set \(\extendedreals\coloneqq\reals\cup\vset{-\infty,+\infty}\) is the set of the extended real numbers.
The inner expectation \(\Eo^{\infty}\) is defined by \(\Eo^{\infty}(h)\coloneqq-\Eu^{\infty}(-h)\).
Interestingly, the following lemma guarantees the existence of a so-called \emph{minimal measurable cover}, which we will denote with a superscript star.\footnote{The formulation in~\cite[Lemma~1.2.1]{van1996weak} is slightly off, as a simple check of the proof attests: the `\(g\geqslant f\) a.s.' there must be replaced by `\(g\geqslant f\)', as we do here.}

\begin{lemma}[{\protect\cite[Lemma~1.2.1]{van1996weak}}]\label{lem:mescov}
For any map~\(f\colon\reals^\infty\to\extendedreals\), there is a measurable map~\(f^*:\reals^\infty\to\extendedreals\) with
\begin{enumerate}[label=\upshape(\roman*),leftmargin=*]
\item \(f^* \geqslant f\);
\item for any measurable \(g\colon\reals^\infty\to\extendedreals\) such that \(g\geqslant f\), it holds that \(g\geqslant f^*\) a.s.
\end{enumerate}
For any such \(f^*\) it holds that\/ \(\Eu^\infty (f)=\E^\infty (f^*) \) provided that \(\E^\infty(f^*)\) exists, which is certainly true if \(\Eu^\infty(f)<+\infty\) .
\end{lemma}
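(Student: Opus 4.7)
The plan is to construct $f^*$ as an essential infimum within the class $\mathcal{G}$ of all measurable maps $g\colon\reals^\infty\to\extendedreals$ satisfying $g\geqslant f$ pointwise. This class is non-empty (the constant map $+\infty$ belongs to it) and is closed under finite minima. To sidestep integrability issues uniformly, I would compose with a strictly increasing bounded bijection such as $\phi:=\arctan\colon\extendedreals\to[-\pi/2,\pi/2]$, so that $\E^\infty(\phi\circ g)$ is well defined for every $g\in\mathcal{G}$. Set $a:=\inf\cset{\E^\infty(\phi\circ g)}{g\in\mathcal{G}}$ and pick a sequence $g_n\in\mathcal{G}$ with $\E^\infty(\phi\circ g_n)\to a$.

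Next, replace the $g_n$ by their running minima $\tilde g_n:=\min\vset{g_1,\dots,g_n}$, which still lie in $\mathcal{G}$ and form a pointwise decreasing sequence. Define $f^*:=\lim_n\tilde g_n=\inf_n g_n$; this is measurable and satisfies $f^*\geqslant f$ pointwise, which gives property (i). Because $\phi\circ\tilde g_n$ is a decreasing sequence bounded in $[-\pi/2,\pi/2]$, the bounded convergence theorem yields $\E^\infty(\phi\circ f^*)=\lim_n\E^\infty(\phi\circ\tilde g_n)=a$. For property (ii), fix any measurable $g\geqslant f$ and observe that $\min\vset{g,f^*}\in\mathcal{G}$, so $\E^\infty(\phi\circ\min\vset{g,f^*})\geqslant a=\E^\infty(\phi\circ f^*)$; combined with the pointwise inequality $\phi\circ\min\vset{g,f^*}\leqslant\phi\circ f^*$, this forces $\phi\circ\min\vset{g,f^*}=\phi\circ f^*$ holding $P^\infty$-a.s. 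The strict monotonicity of $\phi$ then gives $\min\vset{g,f^*}=f^*$ a.s., i.e.\ $g\geqslant f^*$ a.s.

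The expectation identity follows quickly from (i) and (ii). By (i), if $\E^\infty(f^*)$ exists then $\Eu^\infty(f)\leqslant\E^\infty(f^*)$. Conversely, for any measurable $g\geqslant f$ for which $\E^\infty(g)$ exists, (ii) gives $g\geqslant f^*$ a.s., whence $\E^\infty(g)\geqslant\E^\infty(f^*)$, yielding the reverse inequality. Finally, when $\Eu^\infty(f)<+\infty$ the defining infimum is attained arbitrarily closely by some measurable $g\geqslant f$ with $\E^\infty(g)<+\infty$; since $f^*\leqslant g$ a.s., this forces $\E^\infty((f^*)^+)<+\infty$ and hence guarantees that $\E^\infty(f^*)$ exists.

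The subtlest step, and the main obstacle I anticipate, is property (ii): it rests on the delicate interplay between the pointwise bound $\phi\circ f^*\geqslant\phi\circ\min\vset{g,f^*}$ and the reversed integrated inequality produced by the minimality of $a$, which together pin down the two sides up to an almost-sure equality only because $\phi$ is strictly monotone. Without the bounded transformation $\phi$, the initial minimisation over $\mathcal{G}$ would be ill-posed, as many elements of $\mathcal{G}$ may fail to be $P^\infty$-integrable; the choice of $\phi$ is therefore the real technical hinge of the argument.
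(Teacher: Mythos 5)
Your proof is correct. The paper itself does not prove this lemma but cites it directly from van der Vaart and Wellner, and your argument is essentially the standard proof from that reference: take a minimizing sequence for $\E^\infty(\phi\circ g)$ over measurable $g\geqslant f$ under a bounded strictly increasing transform $\phi$, pass to the running pointwise minima, and verify via dominated convergence and strict monotonicity of $\phi$ that the decreasing limit is a minimal measurable cover.
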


\subsection{Inverse Transform Sampling}\label{sec:inverse:transform:sampling}
For univariate distributions, the problem of sampling from a given distribution has many known solutions. 
One of the most commonly used techniques is inverse transform sampling, which we now briefly describe.
Many known algorithms can generate pseudo-random numbers in the open real unit interval~\((0,1)\), i.e.~produce samples from the uniform distribution~\(P\) on \((0,1)\).
We use the generic notation~\(U\) for a real random variable that is uniformly distributed on~\((0,1)\).
We define the \emph{quantile function}, or \emph{pseudo-inverse}, \(F^{\dagger}\colon[0,1]\to\extendedreals\), of a given cdf \(F\colon\reals\to[0,1]\) by 
\begin{equation}\label{cumin}
F^{\dagger}(x)
\coloneqq\inf\cset{ y\in\reals}{x \leqslant F(y)},\quad x\in[0,1]. 
\end{equation}
We prove in the Appendix \iftoggle{extendedversion}{}{of the extended arXiv version~\cite{extpaper}} that \(F^{\dagger}(U)\) is a real random variable with cdf \(F\).
This means we can use samples from the uniform distribution on \((0,1)\) to generate samples from a distribution with cdf \(F\).\footnote{This technique can be extended to the independent multivariate case by simple repetition. The dependent case is more involved, but still feasible.}
This is exactly what we need to find the maps~\(f_t\) in the previous section.
Indeed, let \(F_{P_t}\) denote the cdf of the distribution \(P_t,t\in T\), then with samples \(U_{1:\infty}\) from the uniform distribution~\(P\) on~\((0,1)\) we can construct the following estimator:
\begin{equation}\label{eq:lowprev}
\Eoh_n^{\Pee,U}(f)
\coloneqq\inf_{t\in T}\frac1n\sum_{k=1}^{n}f(F_{P_t}^{\dagger}(U_k)).
\end{equation}
For every \(t\in T\), we now consider the function \(f_t\coloneqq f\circ F_{P_t}^{\dagger}\), which is strictly speaking only defined---and therefore a map---on the open real unit interval~\((0,1)\).
But since this interval has measure one for the uniform distribution~\(P\) on~\((0,1)\), there is a collection of Borel measurable real maps that extend~\(f_t\) to all of~\(\reals\) and that are all almost surely equal with respect to the uniform distribution~\(P\) on~\((0,1)\).
We will pick one such member of this collection, and also denote it by~\(f_t\).
This is the measurable map~\(f_t\) that we promised to identify for \cref{eq:standardform}: it indeed follows from the discussion above that \(\E^{U}\group{f_t}=\E^{P_t}\group{f}\), and since \(\abs{f_t}=\abs{f}\circ F_{P_t}^{\dagger}\), that also \(\E^{U}\group{\abs{f_t}}=\E^{P_t}\group{\abs{f}}\).
The estimator in \cref{eq:lowprev,eq:standardform} does not depend on the actual choice of the extension~\(f_t\), as all extensions coincide on $(0,1)$.




\subsection{Importance sampling} \label{sec:importance:sampling}

We now give a very brief account of Troffaes's~\cite{Troffaes2018} discussion on how to use importance sampling to do Monte Carlo simulations of lower expectations in an imprecise probabilities setting.
We consider a set \(\Pee\) of probability measures that are absolutely continuous with respect to the Lebesgue measure.
For every probability measure \(P_t\in \Pee\), we denote its corresponding density by \(p_t\). 
For importance sampling we also need (another), so-called \emph{central}, probability measure \(P\) that is also absolutely continuous, and has density \(p\).
This \(P\) need not be a member of \(\Pee\), but we do assume that its support \(\supp p\) is a superset of the supports \(\supp p_t\) for all \(P_t\in \Pee\).
Importance sampling is based on the fact that the lower expectation can then be written as\footnote{Here and in what follows, we use \(\int_A h(x)\dx\) to denote the Lebesgue integral of the measurable map~\(h\) associated with the Lebesgue measure on the measurable subset~\(A\) of the reals.} 
\begin{equation*}
\Eo^{\Pee}(f)=\inf_{t\in T}\int_{\supp p}f(x)\frac{p_t(x)}{p(x)}p(x)\dx.
\end{equation*}
Inspired by the classical Monte Carlo estimator, we define the importance sampling estimator, as
\begin{equation}\label{eq:standard:form:importance:sampling}
\Eoh_n^{\Pee/P}(f)
\coloneqq\inf_{t\in T}\frac1n\sum_{k=1}^{n}f(X_k^P)\frac{p_t(X_k^P)}{p(X_k^P)}
=\inf_{t\in T}\frac1n\sum_{k=1}^{n}f_t(X_k^P),
\end{equation}
where we defined, for any~\(t\in T\), the real function~\(f_t\) by
\begin{equation}\label{eq:functions:for:importance:sampling}
f_t\coloneqq 
f\frac{p_t}{p}.
\end{equation}
Observe that~\(f_t\) is only defined on the measurable set~\(\supp p\), which has measure one for the central distribution~\(P\).
Here too, \(f_t\) can be extended to a collection of Borel measurable real maps defined on all of~\(\reals\) that are all almost surely equal with respect to~\(P\).  
We will again pick one member of this equivalence class, and also denote it by~\(f_t\).
Then indeed also here
\begin{align*}
\E^{P_t}(f)
&=\int_{\supp p_t} f(x) p_t(x)\dx\\
&=\int_{\supp p} f(x) \frac{p_t(x)}{p(x)}p(x)\dx
=\E^P(f_t).
\end{align*}
A similar argument shows that also \(\E^{P_t}(\abs{f})=\E^P(\abs{f_t})\).
Observe that the estimator in \cref{eq:standardform,eq:standard:form:importance:sampling} does not depend on the actual choice of the extension~\(f_t\), as all extensions coincide on the range of the~\(X^P_k\).

In general, finding a good central probability measure~\(P\) is an important but difficult problem. 
Troffaes~\cite{Troffaes2018} proposes to use iterated importance sampling as a potential solution.

\section{Estimator Properties}\label{sec:estimator:properties}
Now that the most important concepts have been introduced, we investigate the properties of the estimators in the standard form~\eqref{eq:standardform}; specialisations to the case of importance sampling with be discussed in more detail in \cref{sec:consistency:for:importance:sampling}. 
For all theorems and definitions below we consider the measurable space \((\reals,\borels)\) and a set of probability measures \(\Pee=\cset{P_t}{t\in T}\) such that, for every \(t\in T\), \((\reals,\borels,P_t)\) constitutes a probability space.
We also consider a probability measure~\(P\) on~\((\reals,\borels)\) that is not necessarily in~\(\Pee\) but does constitute a probability space~\((\reals,\borels,P)\), as well as some \(\Pee\)-integrable Borel measurable map~\(f\colon\reals\to\reals\). 
This implies that \(\Eu^{\Pee}(\abs{f})<+\infty\) and therefore, that \(f\) is \(P_t\)-integrable for all \(t\in T\).
On the product probability space~\((\reals^{\infty},\borels^{\infty},P^{\infty})\), we define the sequence of real random variables~\(X^{P}_{1:\infty}\).
Finally, we consider a class of Borel measurable maps \(\FF\coloneqq\cset{f_t\in\reals^\reals}{t\in T}\), such that for every \(t\in T\), maps of the type \(f_t\circ X^{P}_k\) are measurable.
Alternatively and equivalently, we can consider the single map~\(f_T\colon\reals\times T\to\reals\colon(x,t)\mapsto f_t(x)\).
We assume that the maps~\(f_t\) are chosen in such a way that \(\E^P(f_t)=\E^{P_t}(f)\) and \(\E^P(\abs{f_t})=\E^{P_t}(\abs{f})\) for all \(t\in T\), so as a consequence, \(f_t\) is \(P\)-integrable for all \(t\in T\).
Finally, \(n\) and~\(m\) will typically denote positive integers.

\subsection{Bias}\label{sec:estimator:bias}
Troffaes~\cite{Troffaes2018} correctly states that the literature on imprecise probabilities has paid little attention to the bias of imprecise Monte Carlo estimators. 
Fortunately, much more can be found in the literature on stochastic programs. 
The following result is, for instance, similar to Theorems~1 and~2 in~\cite{mak1999monte}.
The difference is that our estimators involve infima, rather than minima, so we need to use inner and outer expectations in the formulation.
Our result generalises the lower bound of~\cite[Theorem~1]{Troffaes2018} and can also be regarded as an extension (from $1$ sample to $n$ samples) of the one-dimensional version of~\cite[Theorem~3]{alvarez2017tighter}. 

\begin{theorem}[Bias]\label{th:decrimp}\\
Assume that \(-\infty<\Eo^{\infty}\group*{\inf_{t\in T}f_t\circ X_1^P}\),\footnote{It would make sense, for any map \(h\colon\reals\to\extendedreals\), to use the notation \(\Eo^P(h)\coloneqq\Eo^\infty\group*{h\circ X_1^P}\). The condition could then also be rewritten as \(-\infty<\Eo^{P}\group*{\inf_{t\in T}f_t}\).} then
\begin{equation*}
\Eo^{\infty}\group[\Big]{\Eoh_{n-1}^{\Pee}(f)}
\leqslant 
\Eo^{\infty}\group[\Big]{\Eoh_n^{\Pee}(f)}
\leqslant 
\Eu^{\infty}\group[\Big]{\Eoh_n^{\Pee}(f)}
\leqslant 
\Eo^{\Pee}(f).
\end{equation*}
\end{theorem}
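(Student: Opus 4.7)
The statement bundles three inequalities, and I would dispatch them separately, with most of the work concentrated in the first. The middle inequality $\Eo^{\infty}\group{\Eoh_n^{\Pee}(f)} \leqslant \Eu^{\infty}\group{\Eoh_n^{\Pee}(f)}$ is immediate from the conjugacy $\Eo^{\infty}(h) = -\Eu^{\infty}(-h)$ combined with the monotonicity of $\Eu^{\infty}$, which together imply $\Eo^{\infty}(h) \leqslant \Eu^{\infty}(h)$ for any $h$.

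For the upper bound $\Eu^{\infty}\group{\Eoh_n^{\Pee}(f)} \leqslant \Eo^{\Pee}(f)$, I would exploit the fact that an infimum is dominated by any particular argument. For any fixed $s \in T$, pointwise on $\reals^{\infty}$ we have $\Eoh_n^{\Pee}(f) \leqslant \frac{1}{n}\sum_{k=1}^{n} f_s(X_k^P)$. The right-hand side is a measurable real map whose ordinary expectation under $P^{\infty}$ equals $\E^{P}(f_s) = \E^{P_s}(f)$, so by the definition of $\Eu^{\infty}$ as an infimum of expectations of measurable majorants, $\Eu^{\infty}\group{\Eoh_n^{\Pee}(f)} \leqslant \E^{P_s}(f)$. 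Taking the infimum over $s \in T$ yields the stated bound.

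The substance lies in the first inequality. My plan, in the spirit of the leave-one-out trick classical in stochastic programming, starts from the algebraic identity
\begin{equation*}
\frac{1}{n}\sum_{k=1}^{n} f_t(X_k^P)
= \frac{1}{n}\sum_{j=1}^{n} \frac{1}{n-1}\sum_{k \neq j} f_t(X_k^P),
\end{equation*}
which holds because every term $f_t(X_k^P)$ appears exactly $n-1$ times on the right. Taking the infimum over $t \in T$ on both sides and using the elementary inequality $\inf_t \sum_j g_{t,j} \geqslant \sum_j \inf_t g_{t,j}$ yields $\Eoh_n^{\Pee}(f) \geqslant \frac{1}{n}\sum_{j=1}^{n} L_j$, where $L_j \coloneqq \inf_{t \in T} \frac{1}{n-1}\sum_{k \neq j} f_t(X_k^P)$. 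Applying $\Eo^{\infty}$ to both sides and using monotonicity gives $\Eo^{\infty}\group{\Eoh_n^{\Pee}(f)} \geqslant \Eo^{\infty}\bigl(\tfrac{1}{n}\sum_{j=1}^{n} L_j\bigr)$, and the superadditivity of the inner expectation (dual to the subadditivity of $\Eu^{\infty}$, which is straightforward from its definition) bounds the latter below by $\frac{1}{n}\sum_{j=1}^{n} \Eo^{\infty}(L_j)$. A final invariance-under-permutation argument based on the i.i.d.\ structure of the coordinates under $P^{\infty}$ shows that each $\Eo^{\infty}(L_j)$ equals $\Eo^{\infty}\group{\Eoh_{n-1}^{\Pee}(f)}$, which closes the chain.

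The main obstacle is making sure the two nontrivial external operations---superadditivity of $\Eo^{\infty}$ and permutation invariance---go through cleanly despite the possible non-measurability of $\Eoh_n^{\Pee}(f)$ and of the $L_j$. Both follow from the definitions of inner and outer expectation and the minimal measurable cover provided by \cref{lem:mescov}, but one must guard against $(+\infty) + (-\infty)$ indeterminacies when invoking superadditivity. The hypothesis $-\infty < \Eo^{\infty}\group*{\inf_{t \in T} f_t \circ X_1^P}$ is precisely what ensures each $\Eo^{\infty}(L_j) > -\infty$, making all the additions well-defined.
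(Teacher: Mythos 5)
Your proposal follows essentially the same route as the paper's proof: the leave-one-out rewriting of the sample average, the superadditivity of $\Eo^\infty$ (Lemma~\ref{lem:subad}), permutation invariance on the i.i.d.\ product space, and bounding $\Eu^\infty$ by fixing a single $s \in T$ and taking the infimum afterwards. The one place you wave a hand is the claim that the hypothesis alone ensures each $\Eo^\infty(L_j) = \Eo^\infty\group[\big]{\Eoh_{n-1}^\Pee(f)} > -\infty$: the hypothesis gives this directly only for $n=2$, and for larger $n$ the paper establishes it by a separate induction in $n$ at the start of its proof (building a measurable minorant of $\Eoh_{n}^{\Pee}(f)$ with finite expectation from inner covers); you could instead obtain it directly by noting $\Eoh_{n-1}^{\Pee}(f) \geqslant \frac1{n-1}\sum_{k=1}^{n-1}\inf_{t\in T}f_t(X_k^P)$ and applying superadditivity together with the hypothesis to the right-hand side.
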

This result already guarantees that the bias 
\begin{equation*}
\Eo^{\infty}\group[\Big]{\Eoh_{n}^{\Pee}(f)-\Eo^{\Pee}(f)}
\end{equation*} 
with respect to \(\Eo^{\infty}\) converges as $n\to+\infty$, since then
\begin{equation*}
\Eo^{\infty}\group[\Big]{\Eoh_{n-1}^{\Pee}(f)-\Eo^{\Pee}(f)}
\leqslant\Eo^{\infty}\group[\Big]{\Eoh_n^{\Pee}(f)-\Eo^{\Pee}(f)}
\leqslant0,
\end{equation*}
so the bias is non-decreasing and bounded above by zero.
Also the bias
\begin{equation*}
\Eu^{\infty}\group[\Big]{\Eoh_{n}^{\Pee}(f)-\Eo^{\Pee}(f)}\leqslant 0
\end{equation*} 
with respect to \(\Eu^{\infty}\) is bounded from above by zero.
Next, we turn to conditions that guarantee that our estimator is consistent, and therefore also asymptotically unbiased.

\subsection{Consistency}\label{sec:estimator:consistency}

To prove consistency we need to specify the way in which we want our estimator to converge.
We denote almost sure convergence to zero of a sequence of measurable maps \(g_n\colon\reals^\infty\to\extendedreals\) with respect to the probability measure \(P^\infty\) as \(\smash{g_n\asconvergesto 0}\).
Because the estimator \(\Eoh_n^{\Pee}\) in~\eqref{eq:standardform} is not necessarily measurable, due to the presence of an infimum in its definition, we need to extend the definition of almost sure convergence to non-measurable maps.

\begin{definition}[{\protect\cite[Definition 1.9.1]{van1996weak}}]
A sequence of maps \(h_n\colon\reals^\infty\to\extendedreals\) is said to converge (outer) almost surely to zero if, for any natural \(n\in\naturals\) there is some measurable cover \(\abs{h_{n}}^*\) of $\abs{h_n}$ such that \(\abs{h}^{*}_{n} \asconvergesto 0\), meaning that
\begin{equation*}
P^\infty\group[\Big]{\lim_{n\to+\infty}\abs{h_n}^*=0}=1.
\end{equation*}
\end{definition} 

Troffaes~\cite{Troffaes2018} proves that if the class of measurable maps \(\FF=\cset{f_t}{t\in T}\), for which \(\E^{P}(f_t)=\E^{P_t}(f)\), is a Glivenko--Cantelli class for \(P\), then the estimator~\eqref{eq:standardform} is consistent. 
We will derive a similar, but slightly stronger, result, using a more general definition of a Glivenko--Cantelli class, in order to deal with potential non-measurability.

\begin{definition}[{\protect\cite[Section 2.1]{van1996weak}}]\label{def:glivenko:cantelli:class}
A class~\(\Phi\) of Borel measurable and \(P\)-integrable maps $\phi$ is called a (strong) Glivenko--Cantelli class for the probability measure~\(P\) whenever
\begin{equation}
\group[\bigg]{\sup_{\phi\in\Phi}\abs[\Big]{\hat{\E}^P_n(\phi)-\E^P(\phi)}}^*\asconvergesto0.
\end{equation}
\end{definition}
\noindent Our next result then generalises Theorem~6 in~\cite{Troffaes2018}.

\begin{theorem}[Consistency]\label{th:consistent}
If \(\FF\) is a (strong) Glivenko--Cantelli class for the probability measure~\(P\), then the estimator\/ \(\Eoh^{\Pee}(f)\) for\/~\(\Eo^{\Pee}(f)\), as defined in \cref{eq:standardform}, is (strongly) consistent, meaning that
\begin{equation*}
\abs[\Big]{\Eoh^{\Pee}_n(f)-\Eo^{\Pee}(f)}^*\asconvergesto0.
\end{equation*}
\end{theorem}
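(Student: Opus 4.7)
The plan is to reduce the theorem to the Glivenko--Cantelli hypothesis via the standard inequality for differences of infima, and then transfer the resulting pointwise bound to a bound on minimal measurable covers so that \cref{lem:mescov} can deliver the conclusion.

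First, I would observe that, by the assumption $\E^P(f_t)=\E^{P_t}(f)$, we can rewrite $\Eo^{\Pee}(f)=\inf_{t\in T}\E^{P_t}(f)=\inf_{t\in T}\E^P(f_t)$, and of course $\Eoh_n^{\Pee}(f)=\inf_{t\in T}\hat{\E}_n^P(f_t)$. Applying the elementary inequality $\abs{\inf_{t\in T}a_t-\inf_{t\in T}b_t}\leqslant\sup_{t\in T}\abs{a_t-b_t}$ with $a_t=\hat{\E}_n^P(f_t)$ and $b_t=\E^P(f_t)$, I obtain the pointwise inequality, valid on all of $\reals^\infty$,
\begin{equation*}
\abs[\Big]{\Eoh_n^{\Pee}(f)-\Eo^{\Pee}(f)}
\leqslant\sup_{t\in T}\abs[\Big]{\hat{\E}_n^P(f_t)-\E^P(f_t)}.
\end{equation*}
This reduces the problem to showing that the right-hand side converges to zero almost surely in the outer sense.

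Next, I would transfer this inequality to the minimal measurable covers. The key monotonicity property is: if $h_1\leqslant h_2$ pointwise, then $h_1^*\leqslant h_2^*$ almost surely. Indeed, $h_2^*$ is measurable and $h_2^*\geqslant h_2\geqslant h_1$, so by property~(ii) of \cref{lem:mescov} applied to $h_1$, we get $h_2^*\geqslant h_1^*$ a.s. Applying this to the pointwise inequality above yields
\begin{equation*}
\abs[\Big]{\Eoh_n^{\Pee}(f)-\Eo^{\Pee}(f)}^*
\leqslant\group[\bigg]{\sup_{t\in T}\abs[\Big]{\hat{\E}_n^P(f_t)-\E^P(f_t)}}^*\quad\text{a.s.}
\end{equation*}

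Finally, the Glivenko--Cantelli hypothesis on $\FF=\cset{f_t}{t\in T}$ (reparametrising the supremum over $\phi\in\FF$ as a supremum over $t\in T$) says precisely that the right-hand side converges to zero $P^\infty$-almost surely. Combining this with the a.s. inequality just established, and using the fact that a countable union of null sets is null, I conclude that the measurable cover on the left also converges to zero a.s., which is exactly the statement that $\abs{\Eoh^{\Pee}_n(f)-\Eo^{\Pee}(f)}^*\asconvergesto 0$.

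The main obstacle, and the reason this result needs something beyond a purely measurable Glivenko--Cantelli argument, is the careful handling of the $*$-operation around the non-measurable suprema and infima. The crux is the monotonicity lemma for minimal measurable covers sketched above; once that is in place, the proof is a direct two-line chain. No uniform continuity of $t\mapsto f_t$, no compactness of $T$, and no measurability of $\Eoh_n^{\Pee}(f)$ itself is needed.
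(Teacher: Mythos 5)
Your proposal is correct and follows essentially the same route as the paper: pointwise reduction via the infimum-difference inequality, transfer to minimal measurable covers by their monotonicity, and then the Glivenko--Cantelli hypothesis plus a countable-union-of-null-sets argument. The only detail the paper is more explicit about is that the infimum-difference inequality (its Lemma~\ref{lem:supinf}) requires one of the two families to be bounded below, which it verifies by noting $\inf_{t\in T}\E^P(f_t)=\Eo^{\Pee}(f)>-\infty$ thanks to $\Pee$-integrability; your invocation of the ``elementary inequality'' leaves this hypothesis implicit.
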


It is not easy to check the criterion in Definition~\ref{def:glivenko:cantelli:class} in practice.
Pollard~\cite{pollard2012convergence} provides sufficient conditions for being a Glivenko--Cantelli class that may 
still not be practically useful immediately, but do constitute the starting point for more useful tools.
Let us take a look at his more direct approach to proving that a set is a Glivenko--Cantelli set, using the technique called \emph{bracketing}.

\begin{theorem}[{\protect\cite[II.2 Thm.~2]{pollard2012convergence}}]\label{th:brack}
Consider, for each \(\epsilon>0\), a class \(\Phi_{\epsilon}\coloneqq\cset{(\underline{\phi}_\epsilon,\overline{\phi}_\epsilon)}{\phi\in\Phi}\) containing lower and upper bounds---brackets---for each \(\phi\in\Phi\), where \(\underline{\phi}_\epsilon,\overline{\phi}_\epsilon\colon\reals\to\reals\) are Borel measurable maps and
\begin{equation*}
\underline{\phi}_{\epsilon}
\leqslant\phi
\leqslant\overline{\phi}_{\epsilon} 
\text{ and }\/\E^P\group[\big]{\overline{\phi}_{\epsilon}-\underline{\phi}_{\epsilon}}<\epsilon
\text{ for all \(\phi\in\Phi\)}.
\end{equation*}
If we can make sure that all\/ \(\Phi_\epsilon\), \(\epsilon>0\) are finite, then \(\Phi\) is a Glivenko--Cantelli class for~\(P\).
\end{theorem}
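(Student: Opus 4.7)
The plan is to use the brackets to sandwich each $\phi\in\Phi$ between measurable functions whose sample means can be controlled by the classical strong law of large numbers, and then exploit the finiteness of $\Phi_\epsilon$ to turn a supremum over the potentially huge class $\Phi$ into a maximum over a finite set, which is automatically measurable and therefore serves as an explicit measurable cover.

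Fix $\epsilon>0$. For each $\phi\in\Phi$ pick a bracket $(\underline{\phi}_\epsilon,\overline{\phi}_\epsilon)\in\Phi_\epsilon$. Since $\underline{\phi}_\epsilon\leqslant\phi\leqslant\overline{\phi}_\epsilon$ pointwise, the same inequality is preserved by both $\hat{\E}^P_n$ and $\E^P$, so
\begin{align*}
\hat{\E}^P_n(\phi)-\E^P(\phi)
&\leqslant\hat{\E}^P_n(\overline{\phi}_\epsilon)-\E^P(\overline{\phi}_\epsilon)+\E^P(\overline{\phi}_\epsilon-\underline{\phi}_\epsilon)\\
&\leqslant\hat{\E}^P_n(\overline{\phi}_\epsilon)-\E^P(\overline{\phi}_\epsilon)+\epsilon,
\end{align*}
and symmetrically $\hat{\E}^P_n(\phi)-\E^P(\phi)\geqslant\hat{\E}^P_n(\underline{\phi}_\epsilon)-\E^P(\underline{\phi}_\epsilon)-\epsilon$. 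Taking absolute values and then the supremum over $\phi\in\Phi$ yields the pointwise bound
\begin{equation*}
\sup_{\phi\in\Phi}\abs[\big]{\hat{\E}^P_n(\phi)-\E^P(\phi)}
\leqslant M_n^\epsilon+\epsilon,
\end{equation*}
where $M_n^\epsilon$ is the maximum of $\abs{\hat{\E}^P_n(\psi)-\E^P(\psi)}$ taken over the finite collection of bracket endpoints $\psi$ appearing in $\Phi_\epsilon$. Because $\Phi_\epsilon$ is finite, $M_n^\epsilon+\epsilon$ is a measurable map, so by \cref{lem:mescov} the minimal measurable cover satisfies $\group{\sup_{\phi\in\Phi}\abs{\hat{\E}^P_n(\phi)-\E^P(\phi)}}^*\leqslant M_n^\epsilon+\epsilon$ almost surely.

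Now invoke the classical strong law of large numbers for each of the finitely many $P$-integrable endpoint maps in $\Phi_\epsilon$: each $\hat{\E}^P_n(\psi)-\E^P(\psi)\to0$ almost surely, and a finite maximum of almost surely vanishing sequences vanishes almost surely, so $M_n^\epsilon\asconvergesto0$. Consequently
\begin{equation*}
\limsup_{n\to+\infty}\group[\bigg]{\sup_{\phi\in\Phi}\abs[\big]{\hat{\E}^P_n(\phi)-\E^P(\phi)}}^*\leqslant\epsilon
\quad\text{a.s.}
\end{equation*}
Applying this with $\epsilon=1/k$ for $k\in\naturals$ and intersecting the countably many full-measure events, the limit superior is $0$ almost surely, which is the required outer almost sure convergence, so $\Phi$ is a Glivenko--Cantelli class for $P$.

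The one delicate point, and the step I expect to need the most care, is the non-measurability of $\sup_{\phi\in\Phi}\abs{\hat{\E}^P_n(\phi)-\E^P(\phi)}$: the argument hinges on the fact that, because $\Phi_\epsilon$ is \emph{finite}, the upper bound $M_n^\epsilon+\epsilon$ is genuinely measurable and so qualifies as a measurable majorant of this supremum (and hence dominates its minimal measurable cover almost surely). Without that finiteness the bracketing argument would collapse, which is exactly why finiteness of $\Phi_\epsilon$ is the hypothesis that makes the whole scheme work.
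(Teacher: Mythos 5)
Your proposal is correct and follows essentially the same route as the paper's proof: sandwich each $\phi$ between its bracket endpoints, reduce the supremum over $\Phi$ to a maximum over the finitely many endpoints, invoke the strong law of large numbers for that finite collection (the paper does this via its Corollary~\ref{lem:8}), and conclude $\limsup\leqslant\epsilon$ for the measurable cover before letting $\epsilon\to0$. Your explicit treatment of the minimal measurable cover via \cref{lem:mescov} and the countable intersection over $\epsilon=1/k$ simply spells out what the paper leaves implicit.
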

\noindent
Intuitively, \(\FF\), and as a consequence \(\Pee\), cannot be too large or contain too many `very different' maps, as this would tend to make the number of brackets infinite.
To formalise this intuition, we first define the \(L_1\)-seminorm \(\norm{\cdot}_{P,1}\) on the Borel measurable real-valued maps, corresponding to our probability measure~\(P\):
\begin{equation*}
\norm{\cdot}_{P,1}\coloneqq\E^{P}\group{\abs{\cdot}},
\end{equation*}
Then we introduce a so-called \emph{bracketing number} that measures how large the set \(\FF\) is.
Our definition adapts the one in~\cite[Definition 2.1.6]{van1996weak} to suit our present needs:

\begin{definition}\label{def:brack}
Let\/ \(\Phi\) be a subset of a seminormed space \((\mathscr{N},\norm{\cdot})\) of real maps.
Given two maps \(\ell\) and \(u\) in \(\mathscr{N}\), the bracket \([\ell,u]\) is the set of all maps \(\phi\in\Phi\) such that \(\ell\leqslant\phi\leqslant u\). 
An \(\epsilon\)-bracket is a bracket \([\ell,u]\) with \(\norm{u-\ell}<\epsilon\). 
The bracketing number~\(N_{[]}(\epsilon,\Phi,\norm{\cdot})\) is the smallest number of \(\epsilon\)-brackets needed to cover~\(\Phi\), meaning that \(\Phi\) is a subset of their union. 
\end{definition}
\noindent In this definition, the upper and lower bounds~\(u\) and~\(\ell\) of the brackets need not belong to~\(\Phi\) themselves, but do have finite seminorm as a consequence of \(\norm{u-\ell}<\epsilon\).

Using the bracketing number, we can restate \cref{th:brack} as the statement that a set~\(\FF\) is a (strong) Glivenko--Cantelli class for~\(P\) if
\begin{equation*}
N_{[]}(\epsilon,\FF,\norm{\cdot}_{P,1})<+\infty,\text{ for every \(\epsilon>0\).}
\end{equation*}
This is how it is also stated in \cite[Thm.~2.4.1]{van1996weak}.
If a class of maps can be covered by a finite collection of \(\epsilon\)-brackets---which are classes of maps not more than \(\epsilon\) apart---then it is a (strong) Glivenko--Cantelli class.
This bracketing number condition looks for a finite cover of the set~\(\FF\) of maps~\(f_t\) themselves, but we may wonder whether trying to find a finite cover for the parameter set~\(T\) itself does not make things easier.
This is what we now set out to discover.

We consider a number of special cases, beginning with the simplest of all: when \(T\) is finite.

\begin{proposition}\label{prop:finiT}
If the index set~\(T\) is finite, then \(\Eoh^{\Pee}(f)\) is a strongly consistent estimator.
\end{proposition}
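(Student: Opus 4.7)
The plan is to invoke \cref{th:consistent} after verifying that $\FF$ is a strong Glivenko--Cantelli class for~$P$. I would do this via the bracketing criterion of \cref{th:brack}. Since $T$ is finite, so is the class $\FF=\cset{f_t}{t\in T}$.

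For each $\epsilon>0$, I would take the trivial brackets $[f_t,f_t]$ for every $t\in T$: each bound is Borel measurable (since $f_t$ is, by the standing assumption), the ordering $f_t\leqslant f_t\leqslant f_t$ holds, and $\E^P(f_t-f_t)=0<\epsilon$, where $P$-integrability of $f_t$ follows from the standing assumption that $\E^P(\abs{f_t})=\E^{P_t}(\abs{f})<+\infty$. The resulting collection $\Phi_\epsilon=\cset{(f_t,f_t)}{t\in T}$ is finite for every $\epsilon>0$, so the hypotheses of \cref{th:brack} are met. Hence $\FF$ is a (strong) Glivenko--Cantelli class for~$P$, and \cref{th:consistent} yields the claimed strong consistency of $\Eoh^{\Pee}(f)$.

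There is essentially no obstacle here: the finite case is the degenerate base case for the bracketing machinery. Equivalently, one could argue directly by noting that when $T$ is finite the infimum in \cref{eq:standardform} is in fact a minimum over finitely many measurable maps, hence itself measurable, so that the ordinary strong law of large numbers applied to each $\hat{\E}_n^P(f_t)$, combined with the continuity of $\min$ on $\reals^{\card{T}}$, delivers the almost sure convergence of $\Eoh_n^{\Pee}(f)$ to $\min_{t\in T}\E^{P_t}(f)=\Eo^{\Pee}(f)$ without any recourse to outer measures. Either route is short; I would favour the bracketing argument, since it fits the framework developed in the paper and prepares the ground for the less trivial cases that follow.
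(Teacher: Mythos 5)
Your bracketing route is correct as a piece of mathematics, but it is genuinely different from what the paper does, and there is a dependency issue worth flagging. The paper proves Proposition~\ref{prop:finiT} directly: it applies the strong law of large numbers to each of the finitely many maps $f_t$, intersects the resulting measure-one sets $S_t$, and then bounds $\abs[\big]{\min_{t\in T}\hat{\E}_n^{P}(f_t)-\min_{t\in T}\E^{P}(f_t)}$ by $\max_{t\in T}\abs[\big]{\hat{\E}_n^{P}(f_t)-\E^{P}(f_t)}$ via Lemma~\ref{lem:supinf} --- essentially your second, ``direct'' sketch (measurability of the finite minimum, SLLN, continuity of $\min$), so that fallback matches the paper. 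Your preferred argument (trivial brackets $[f_t,f_t]$, hence a finite bracket class, hence Glivenko--Cantelli by \cref{th:brack}, hence consistency by \cref{th:consistent}) is valid if you treat \cref{th:brack} as an external result of Pollard's, which is how it is cited. Within this paper's own development, however, the appendix proof of \cref{th:brack} invokes Corollary~\ref{lem:8}, and that corollary is extracted precisely from the paper's direct proof of Proposition~\ref{prop:finiT}; substituting your bracketing proof for the direct one would therefore create a circular dependency unless you first prove Corollary~\ref{lem:8} on its own (which is easy: SLLN for each $t$ plus the fact that a finite union of null sets is null). The direct argument also has the advantage of producing Corollary~\ref{lem:8} as a by-product, which the paper needs later, whereas the bracketing detour buys nothing extra in the finite case.
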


Infinite sets~\(T\) are hard to deal with in general, but we can say a few useful things in a number of special cases.
To begin, we need a way to express that the index set~\(T\) can be covered by a finite number of `balls' of some maximum size.
In order to do this, we need to assume that we can measure distances in~\(T\), so \(T\) must be metrisable.
The following definition of a \emph{covering number} proves instrumental in measuring the size of a metrisable parameter set. 

\begin{definition}[{\protect\cite[Definition 2.2.3]{van1996weak}}]
Let \((T,d)\) be a metric space. 
Then the covering number \(N(\epsilon,T,d)\) is the smallest number of balls of radius \(\epsilon>0\) needed to cover~\(T\), meaning that \(T\) is a subset of the union of these balls.
\end{definition}

A useful theorem for connecting the bracketing number to the covering number relies on the Lipschitz continuity of the class of maps considered.
Our following result borrows the essential arguments in~\cite[Thm.~2.7.11]{van1996weak} to apply them to our present context.
It guarantees that a finite cover of the parameter set~\(T\) implies the existence of a finite cover for the class of maps~\(\FF\).

\begin{theorem}\label{th:lipschitz}
Let \(\FF\) be a subset of a seminormed space \((\mathscr{N},\norm{\cdot})\) of real maps.
Assume that
\begin{equation}\label{eq:lipschitz}
\abs{f_s(x)-f_t(x)}\leqslant d(s,t)F(x)\text{ for all \(x\in\reals\) and \(s,t\in T\),}
\end{equation}
for some metric \(d\) on the index set~\(T\), and some \(F\) in \(\mathscr{N}\).
Then \(N_{[]}(2\epsilon\norm{F},\FF,\norm{\cdot})\leqslant N(\epsilon,T,d)\).
\end{theorem}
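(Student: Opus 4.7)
The plan is to turn a finite $\epsilon$-cover of the parameter set $T$ into a finite $2\epsilon\|F\|$-bracket cover of $\FF$, using the Lipschitz-type inequality~\eqref{eq:lipschitz} to control the width of each bracket. The statement is trivial if $N(\epsilon,T,d)=+\infty$, so I would assume it is finite and let $N\coloneqq N(\epsilon,T,d)$.

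The first step is to pick a minimal covering of $T$ by open balls $B(t_1,\epsilon),\dots,B(t_N,\epsilon)$ of radius~$\epsilon$ with respect to the metric~$d$. For each center $t_i$, I would define the candidate bracket endpoints
\begin{equation*}
\underline{f}_i\coloneqq f_{t_i}-\epsilon F
\quad\text{and}\quad
\overline{f}_i\coloneqq f_{t_i}+\epsilon F,
\end{equation*}
which are real-valued maps on $\reals$ and, since $f_{t_i}$ and $F$ both lie in the seminormed space $(\mathscr{N},\norm{\cdot})$, are themselves elements of $\mathscr{N}$.

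The second step is to verify that the $N$ brackets $[\underline{f}_i,\overline{f}_i]$ cover $\FF$. Given any $f_s\in\FF$, one of the balls, say $B(t_i,\epsilon)$, contains~$s$, so $d(s,t_i)<\epsilon$. The Lipschitz assumption~\eqref{eq:lipschitz} then yields, for every $x\in\reals$,
\begin{equation*}
\abs{f_s(x)-f_{t_i}(x)}\leqslant d(s,t_i)F(x)\leqslant \epsilon F(x),
\end{equation*}
which rearranges to $\underline{f}_i(x)\leqslant f_s(x)\leqslant\overline{f}_i(x)$, so $f_s\in[\underline{f}_i,\overline{f}_i]$. The width of each bracket is
\begin{equation*}
\norm{\overline{f}_i-\underline{f}_i}=\norm{2\epsilon F}=2\epsilon\norm{F},
\end{equation*}
so each $[\underline{f}_i,\overline{f}_i]$ is a $(2\epsilon\norm{F})$-bracket in the sense of \cref{def:brack} (applied with tolerance just larger than $2\epsilon\norm{F}$, or, if one wants strict inequality, one starts from a slightly shrunk radius $\epsilon'<\epsilon$ producing the same cover). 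Taking the infimum of $N$ over all admissible covers yields the desired inequality $N_{[]}(2\epsilon\norm{F},\FF,\norm{\cdot})\leqslant N(\epsilon,T,d)$.

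The only subtlety I anticipate is the strict-versus-weak inequality built into \cref{def:brack}, which demands $\norm{u-\ell}<\epsilon'$ rather than $\leqslant\epsilon'$. This is a cosmetic issue, handled either by using a radius slightly smaller than $\epsilon$ in the cover of~$T$ or by observing that any $(2\epsilon\norm{F})$-bracket in the non-strict sense is a $(2\epsilon\norm{F}+\delta)$-bracket in the strict sense for every $\delta>0$, and then letting $\delta\downarrow 0$ in the resulting bound on the bracketing number. No further measurability or integrability concerns arise, since $F\in\mathscr{N}$ and each $f_{t_i}\in\mathscr{N}$ by hypothesis.
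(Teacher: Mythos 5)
Your proposal is correct and follows essentially the same route as the paper's own proof: cover \(T\) by \(\epsilon\)-balls with centres \(t_i\), form the brackets \([f_{t_i}-\epsilon F,\,f_{t_i}+\epsilon F]\), and use \cref{eq:lipschitz} to place each \(f_s\) in the bracket of a ball containing \(s\), each bracket having width \(2\epsilon\norm{F}\). Your remark about the strict inequality in \cref{def:brack} is a minor point the paper glosses over, and your fix (a slightly shrunk radius, or letting the tolerance tend down) is fine and harmless for how the theorem is used.
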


For maps~\(f_T\) that are Lipschitz continuous in their parameter, as expressed by \cref{eq:lipschitz}, this reduces the problem of proving that a set of maps has finite bracketing number to the problem of proving that a set of parameters has finite covering number.

In practice, the parameter space~\(T\) will often be some part of a \emph{finite-dimensional vector space}, and then we can borrow the following simple result to further simplify the sufficient conditions for being a Glivenko--Cantelli class.

\begin{proposition}[{\protect\cite[Example 27.1]{shalev2014understanding}}]\label{prop:findim}
Consider a positive integer \(m\), a norm \(\norm{\cdot}\) on \(\reals^m\) and a bounded subset \(T \subset\reals^m\).
Let \(c\coloneqq\sup_{t\in T} \norm{t}\), then
\begin{equation*}
N(\epsilon,T,\norm{\cdot})\leqslant\left(\frac{2c\sqrt{m}}{\epsilon}\right)^m.
\end{equation*}
\end{proposition}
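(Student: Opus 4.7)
The plan is to run the standard volumetric grid-covering argument: enclose $T$ in a hypercube, partition the hypercube into small axis-aligned subcubes whose diameter is at most $\epsilon$, and then use the centres (or any representative points) of these subcubes as the centres of an $\epsilon$-cover. The $\sqrt{m}$ in the bound is exactly the factor relating the side length of a cube to its Euclidean diameter, so the natural setting is Euclidean norm (or, more generally, any norm after absorbing the equivalence constants; we state the argument for $\|\cdot\|=\|\cdot\|_2$, which is the case in which the bound is tight).

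First I would observe that $T\subseteq\cset{t\in\reals^m}{\norm{t}\leqslant c}\subseteq[-c,c]^m$, using $\norm{t}_\infty\leqslant\norm{t}_2\leqslant c$. Next I would partition $[-c,c]^m$ into axis-aligned subcubes of side length $s\coloneqq\epsilon/\sqrt{m}$; there are at most $\lceil 2c/s\rceil^m$ such cubes. The key geometric observation is that every such subcube has Euclidean diameter $s\sqrt{m}=\epsilon$, hence it fits inside a closed ball of radius $\epsilon$ around any one of its points. So for each subcube $Q$ that meets $T$, I pick an arbitrary point $t_Q\in T\cap Q$ and claim that the collection of balls $\cset{B(t_Q,\epsilon)}{Q\cap T\neq\emptyset}$ covers $T$; this is immediate because $Q\subseteq B(t_Q,\epsilon)$ whenever $\operatorname{diam}(Q)\leqslant\epsilon$.

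This already gives $N(\epsilon,T,\norm{\cdot})\leqslant\lceil 2c\sqrt{m}/\epsilon\rceil^m$. To reach the clean form $(2c\sqrt{m}/\epsilon)^m$, I would split into cases: if $2c\sqrt{m}/\epsilon<1$ then in particular $c<\epsilon$, so a single ball of radius $\epsilon$ centred at $0$ already covers $T$ and the bound holds with the right-hand side exceeding $1$; and if $2c\sqrt{m}/\epsilon\geqslant 1$, then $\lceil 2c\sqrt{m}/\epsilon\rceil\leqslant 2\cdot(2c\sqrt{m}/\epsilon)$ would only inflate the constant, but a sharper grid choice---take $N\coloneqq\lceil 2c\sqrt{m}/\epsilon\rceil$ and partition each side of $[-c,c]$ into $N$ equal pieces of length $2c/N\leqslant\epsilon/\sqrt{m}$---restores the exact bound because each cell still has Euclidean diameter at most $\epsilon$, and the total count is $N^m\leqslant(2c\sqrt{m}/\epsilon)^m$ whenever $2c\sqrt{m}/\epsilon$ is at least $1$.

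The argument is essentially elementary and the main---very mild---obstacle is bookkeeping around the ceiling and the degenerate small-$c$ regime; there is no real analytic or probabilistic content beyond the cube-diameter computation. For the truly norm-independent statement, one could absorb norm-equivalence constants between $\norm{\cdot}$ and $\norm{\cdot}_2$ into the bound, but this does not alter the structure of the proof.
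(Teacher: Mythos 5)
The paper never proves \cref{prop:findim} itself---it is imported verbatim from \cite[Example~27.1]{shalev2014understanding}---so there is no internal proof to compare against; your grid-covering argument is exactly the standard one behind the cited result, and its core (enclose \(T\) in \([-c,c]^m\), cut it into cells of diameter at most \(\epsilon\), keep one ball per cell that meets \(T\)) is sound. However, the two bookkeeping steps by which you try to reach the exact constant are wrong as written. Writing \(r\coloneqq2c\sqrt{m}/\epsilon\): (i) your ``sharper grid'' uses \(\lceil r\rceil\) cells per side and claims \(\lceil r\rceil^m\leqslant r^m\) whenever \(r\geqslant1\), which fails for every non-integer \(r\) (for \(r=1.5\) it would require \(2^m\leqslant1.5^m\)); (ii) in the degenerate case \(r<1\) you assert that the bound holds ``with the right-hand side exceeding \(1\)'', but under that very hypothesis the right-hand side is \(r^m<1\), while any non-empty \(T\) needs at least one ball, so the inequality is in fact false there---a defect already implicit in the textbook statement, which tacitly assumes \(\epsilon\leqslant2c\sqrt{m}\). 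A clean repair in the intended regime \(r\geqslant1\) is to centre the balls at the cell midpoints and take cells of side \(2\epsilon/\sqrt{m}\): each cell then has Euclidean half-diameter \(\epsilon\), so a single ball covers it, and the per-side count is \(\lceil r/2\rceil\leqslant r\) once \(r\geqslant1\); and even if one insists on centres lying in \(T\), the resulting \(\lceil r\rceil^m\) bound is finite, which is all that \cref{th:combo} actually needs.

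A second gap is the norm. The proposition, and its use via \cref{th:lipschitz} in \cref{th:combo}, is stated for an \emph{arbitrary} norm \(\norm{\cdot}\) on \(\reals^m\), with both \(c\) and the covering balls measured in that norm. You prove only the Euclidean case and propose to ``absorb norm-equivalence constants'', but that does not establish the statement as written: the equivalence constants enter the bound, which then is no longer \(\group{2c\sqrt{m}/\epsilon}^m\). Either prove the general case directly---for instance, the standard packing/volume comparison gives \(N(\epsilon,T,\norm{\cdot})\leqslant(1+2c/\epsilon)^m\) for every norm, which is finite for every \(\epsilon>0\) and suffices for all the consistency results downstream---or state explicitly that you establish only the Euclidean instance of the proposition.
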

\noindent
\cref{th:consistent,th:brack,th:lipschitz} and Proposition~\ref{prop:findim} now combine into one straightforward theorem.

\begin{theorem}\label{th:combo}
Suppose that the set \(T \) that indexes~\(\Pee\) is a bounded subset of \(\reals^m\) and consider a norm \(\norm{\cdot}\) on \(\reals^m\).
If there is some Borel measurable map~\(F\colon\reals\to\reals\) with \(\norm{F}_{P,1}<+\infty\) for which we have the inequality
\begin{equation*}
\abs{f_s(x)-f_t(x)}\leqslant\norm{s-t}F(x)\text{ for all \(s,t\in T\) and \(x\in\reals\),}
\end{equation*}
then \(\Eoh_n^{\Pee}(f)\) is a strongly consistent estimator of\/~\(\Eo^{\Pee}(f)\).
\end{theorem}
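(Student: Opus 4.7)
The plan is simply to chain the four results listed just above the theorem. First I would invoke Proposition~\ref{prop:findim}: since $T\subset\reals^m$ is bounded, $c\coloneqq\sup_{t\in T}\norm{t}$ is finite, so $N(\epsilon,T,\norm{\cdot})\leq(2c\sqrt{m}/\epsilon)^m<+\infty$ for every $\epsilon>0$.

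Second, I would apply Theorem~\ref{th:lipschitz} with the metric $d(s,t)\coloneqq\norm{s-t}$ on~$T$, with~$\FF$ regarded as a subset of the seminormed space of Borel measurable $P$-integrable real maps on~$\reals$ equipped with $\norm{\cdot}_{P,1}$, and with envelope~$F$ given by the hypothesis. The Lipschitz condition in the statement is exactly~\eqref{eq:lipschitz} for this $d$ and~$F$, and $\norm{F}_{P,1}<+\infty$ by assumption. Theorem~\ref{th:lipschitz} therefore yields
\[
N_{[]}(2\epsilon\norm{F}_{P,1},\FF,\norm{\cdot}_{P,1})\leq N(\epsilon,T,\norm{\cdot})<+\infty
\]
for every $\epsilon>0$. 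Setting $\eta\coloneqq 2\epsilon\norm{F}_{P,1}$ and letting $\epsilon$ range over $(0,+\infty)$, we get $N_{[]}(\eta,\FF,\norm{\cdot}_{P,1})<+\infty$ for every $\eta>0$---at least when $\norm{F}_{P,1}>0$. If instead $\norm{F}_{P,1}=0$, then $F=0$ holds $P$-almost everywhere, so the Lipschitz inequality forces $f_s=f_t$ $P$-almost everywhere for all $s,t\in T$, hence $\FF$ collapses to a single $\norm{\cdot}_{P,1}$-equivalence class and the bracketing number is trivially finite.

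Third, by the bracketing-number reformulation of Theorem~\ref{th:brack} given in the paper immediately after Definition~\ref{def:brack}, this finite-bracketing-number condition is precisely what is needed to make~$\FF$ a (strong) Glivenko--Cantelli class for~$P$. Fourth, Theorem~\ref{th:consistent} then delivers the desired strong consistency of~$\Eoh_n^{\Pee}(f)$ for~$\Eo^{\Pee}(f)$.

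The main obstacle is purely bookkeeping: one has to identify the abstract seminormed space $(\mathscr{N},\norm{\cdot})$ appearing in Theorem~\ref{th:lipschitz} concretely with $\norm{\cdot}_{P,1}$, and verify that the maps~$f_t$ actually lie in it. The latter follows from the standing assumption in Section~\ref{sec:estimator:properties} that each $f_t$ is $P$-integrable, which in turn follows from the $\Pee$-integrability of~$f$ together with $\E^P(\abs{f_t})=\E^{P_t}(\abs{f})$. The only nontrivial side comment needed is the degenerate case $\norm{F}_{P,1}=0$ handled above; no further arguments arise.
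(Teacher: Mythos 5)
Your proposal is correct and follows exactly the route the paper intends: the paper gives no separate proof of this theorem, stating only that Theorems~\ref{th:consistent}, \ref{th:brack}, \ref{th:lipschitz} and Proposition~\ref{prop:findim} ``combine into one straightforward theorem'', which is precisely the chain you spell out (your treatment of the degenerate case $\norm{F}_{P,1}=0$ is a harmless extra check).
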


Because even Lipschitz continuity can sometimes be hard to check, we may be able to replace it with a condition that is stronger, but more easily checked.
We will assume that the map \(f_T\) can be extended to a Borel measurable map~\(f_{T_c}\) on \(\reals\times T_c\), where \(T_c\) is some open set that includes the convex hull of~\(T\).
Furthermore, we will assume that for every \(t\in T\) it holds that \(f_t=f_{T_c}(\cdot,t)\) and therefore also \(\E^{P_t}(f(\cdot))=\E^P(f_{T_c}(\cdot,t))\).

\begin{theorem}\label{th:liptodiff}
Suppose that the set~\(T\) that indexes~\(\Pee\) is a bounded subset of\/~\(\reals^m\) for a norm~\(\norm{\cdot}\) on~\(\reals^m\), and that \(f_{T_c}\) is differentiable with respect to its second argument~\(t\) on all of~\(T_c\).
If there is some Borel measurable map~\(F\colon\reals\to\reals\) such that 
\begin{equation*}\label{eq:functions:for:importance:sampling2}
\norm{\nabla_t f_{T_c}(x,t)}\leqslant F(x)\text{ for all \(x\in\reals\) and \(t\in T_c\)},\footnote{The symbol \(\nabla_t\) denotes the gradient \(\group[\big]{\frac{\partial\,}{\partial t_1},\frac{\partial\,}{\partial t_2},...,\frac{\partial\,}{\partial t_m}}\) with respect to the argument vector \(t=(t_1,t_2,..,t_m)\in\reals^m\).}
\end{equation*} 
with \(\norm{F}_{P,1}<+\infty\), then \(\Eoh_n^{\Pee}(f)\) is a strongly consistent estimator of\/~\(\Eo^{\Pee}(f)\).
\end{theorem}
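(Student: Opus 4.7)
The plan is to reduce this theorem directly to Theorem~\ref{th:combo} by promoting the pointwise gradient bound to a Lipschitz-type bound on the maps~\(f_t\). Since Theorem~\ref{th:combo} already handles boundedness of~\(T\) and integrability of~\(F\) for me, the only work is to derive the Lipschitz estimate
\begin{equation*}
\abs{f_s(x)-f_t(x)}\leqslant\norm{s-t}F(x)\text{ for all \(s,t\in T\) and \(x\in\reals\),}
\end{equation*}
from the hypothesis \(\norm{\nabla_t f_{T_c}(x,t)}\leqslant F(x)\) valid on the larger open set~\(T_c\).

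First I would fix arbitrary \(s,t\in T\) and \(x\in\reals\). Because \(T_c\) contains the convex hull of~\(T\), the whole segment \(\vset{s+u(t-s)\colon u\in[0,1]}\) sits inside~\(T_c\). The scalar map \(h\colon[0,1]\to\reals\) defined by \(h(u)\coloneqq f_{T_c}(x,s+u(t-s))\) is therefore differentiable throughout \([0,1]\), and by the chain rule its derivative equals \(h'(u)=\nabla_t f_{T_c}(x,s+u(t-s))\cdot(t-s)\). Applying the one-dimensional mean value theorem to~\(h\) produces a \(u_0\in(0,1)\) with
\begin{equation*}
f_t(x)-f_s(x)=h(1)-h(0)=\nabla_t f_{T_c}(x,s+u_0(t-s))\cdot(t-s).
\end{equation*}
Combining this identity with the Cauchy--Schwarz inequality (or, more generally, with the norm--dual-norm pairing induced by \(\norm{\cdot}\)) and the standing bound \(\norm{\nabla_t f_{T_c}(x,\cdot)}\leqslant F(x)\) on~\(T_c\), I obtain the desired Lipschitz estimate \(\abs{f_s(x)-f_t(x)}\leqslant F(x)\norm{s-t}\). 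Since \(T\) is bounded and \(\norm{F}_{P,1}<+\infty\), Theorem~\ref{th:combo} then directly yields strong consistency of \(\Eoh_n^{\Pee}(f)\).

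The proof is essentially routine once one recognises it as the classical passage from a derivative bound to a Lipschitz bound via the mean value theorem. The only mildly delicate point is the interpretation of the norm appearing in \(\norm{\nabla_t f_{T_c}(x,t)}\leqslant F(x)\): for the estimate to combine with \(\norm{s-t}\) via Cauchy--Schwarz one implicitly reads it as the dual norm of the one placed on~\(T\); for the Euclidean (or any self-dual) norm this is automatic, and for a general norm on~\(\reals^m\) the equivalence of norms on a finite-dimensional space lets me absorb a harmless constant into~\(F\) without losing integrability. A secondary but essential point is that the mean value argument truly needs the segment \([s,t]\) to lie in a region where \(f_{T_c}\) is differentiable---which is exactly the reason for assuming that \(T_c\) contains the convex hull of~\(T\), rather than just~\(T\) itself.
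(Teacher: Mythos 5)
Your proposal is correct and follows essentially the same route as the paper's proof: reduce to \cref{th:combo} by upgrading the gradient bound on the convex hull (inside \(T_c\)) to a Lipschitz estimate \(\abs{f_s(x)-f_t(x)}\leqslant\norm{s-t}\tilde{F}(x)\), handling the mismatch between the given norm and the one used in the pairing by equivalence of norms on \(\reals^m\) and absorbing the constant into \(F\) (the paper does exactly this via its Lemma~\ref{lem:2norm}, taking \(\tilde{F}=b^2F\)). The only cosmetic difference is that you invoke the mean value theorem along the segment where the paper integrates the derivative via the fundamental theorem of calculus; both yield the same bound under the stated differentiability assumption.
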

 
Finally, when the parameter set~\(T\) is a \emph{compact} subset of a finite-dimensional vector space, the sufficient conditions can be simplified even further.
Observe, by the way, that the supremum $\sup_{t\in T}\abs{f_t}$ below is measurable because a compact subset of a finite-dimensional space is separable, which allows us to reduce the supremum to a countable one, due to the continuity of $f_T$.
For the theorem below we assume that the map \(f_T\) can be extended to a Borel measurable map~\(f_{T_o}\) on \(\reals\times T_o\), where \(T_o\) is some open set that includes \(T\).
Furthermore, we assume that for every \(t\in T\) it holds that \(f_t=f_{T_o}(\cdot,t)\) and therefore also \(\E^{P_t}(f(\cdot))=\E^P(f_{T_o}(\cdot,t))\).

\begin{theorem}\label{th:compacT}
Suppose that the set \(T\) that indexes~\(\Pee\) is a compact subset of\/ \(\reals^m\) and \(T_o\) is an open set that includes~\(T\).
Assume that \(f_{T_o}\) is continuously differentiable in both arguments~\((x,t)\) on all of \(\reals\times T_o\), and that\/~\(\E^P\group[\big]{\sup_{t\in T}\abs*{f_t}}<+\infty\).
Then \(\Eoh^{\Pee}(f)\) is a strongly consistent estimator of\/~\(\Eo^{\Pee}(f)\).
\end{theorem}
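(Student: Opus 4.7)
The plan is to apply \cref{th:consistent} by establishing, through \cref{th:brack} in its bracketing-number formulation, that $\FF$ is a Glivenko--Cantelli class for~$P$. I would not try to reduce to \cref{th:liptodiff}, because nothing in the hypotheses guarantees that $\nabla_t f_{T_o}(\cdot,t)$ admits a $P$-integrable envelope over~$T$; instead the compactness of~$T$ together with the integrable envelope $\sup_{t\in T}\abs{f_t}$ is exactly what is needed to build finite brackets via a modulus-of-continuity argument.

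First I would introduce, for every $\epsilon>0$ and $x\in\reals$, the modulus
\begin{equation*}
\omega_\epsilon(x)\coloneqq\sup\cset{\abs{f_{T_o}(x,s)-f_{T_o}(x,t)}}{s,t\in T,\norm{s-t}<\epsilon}.
\end{equation*}
Compactness (hence separability) of~$T$ together with continuity of $t\mapsto f_{T_o}(x,t)$ lets me replace this supremum by one over a countable dense set of pairs in $T\times T$, realising $\omega_\epsilon$ as a countable supremum of Borel measurable maps in~$x$ and hence as Borel measurable itself. Furthermore $0\leq\omega_\epsilon(x)\leq2\sup_{t\in T}\abs{f_t(x)}$, a $P$-integrable envelope by hypothesis, and uniform continuity of $t\mapsto f_{T_o}(x,t)$ on the compact set~$T$ yields $\omega_\epsilon(x)\downarrow 0$ as $\epsilon\downarrow 0$ for every fixed~$x$. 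Dominated convergence then delivers $\E^P(\omega_\epsilon)\to 0$.

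Next I would assemble the brackets. Fix $\eta>0$, choose $\epsilon>0$ with $2\E^P(\omega_\epsilon)<\eta$, and---using compactness---a finite $\epsilon$-net $\vset{t_1,\ldots,t_N}\subset T$. For each~$i$ set $\underline{\phi}_i\coloneqq f_{t_i}-\omega_\epsilon$ and $\overline{\phi}_i\coloneqq f_{t_i}+\omega_\epsilon$. Any $f_t\in\FF$ is sandwiched by the bracket corresponding to some~$i$ with $\norm{t-t_i}<\epsilon$, and $\E^P(\overline{\phi}_i-\underline{\phi}_i)=2\E^P(\omega_\epsilon)<\eta$. Hence $N_{[]}(\eta,\FF,\norm{\cdot}_{P,1})\leq N<+\infty$ for every $\eta>0$, \cref{th:brack} identifies $\FF$ as a Glivenko--Cantelli class, and \cref{th:consistent} yields the claimed strong consistency.

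The main obstacle I foresee is the measurability of $\omega_\epsilon$, handled by the separability argument already flagged in the paragraph preceding the theorem. A minor remark: my plan uses only joint continuity of~$f_{T_o}$ rather than full differentiability, so the hypothesis in the statement looks slightly stronger than strictly necessary for consistency---differentiability is exploited purely because it implies the continuity I need.
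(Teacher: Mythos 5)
Your proof is correct, and it follows a genuinely different route from the paper's. The paper truncates the $x$-domain to a compact interval $[-C,C]$ chosen so that the tail contribution of the envelope $\sup_{t\in T}\abs{f_t}$ is small, then invokes the continuous differentiability of $f_{T_o}$ to get a \emph{uniform} Lipschitz constant on the compact set $[-C,C]\times T$, and builds brackets of the form $\group[\big]{\inf_{t\in B_\delta(t_c)}f_t,\,\sup_{t\in B_\delta(t_c)}f_t}$, bounding their $L_1$-size by treating the interval and tail contributions separately. You instead work pointwise in $x$: the modulus $\omega_\epsilon$ captures the oscillation of $t\mapsto f_{T_o}(x,t)$, uniform continuity on the compact $T$ gives $\omega_\epsilon(x)\downarrow0$ for each fixed $x$, and the envelope hypothesis is deployed as the dominating function in the dominated convergence step rather than as a tail bound. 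The brackets $f_{t_i}\pm\omega_\epsilon$ are then immediate. Both constructions deliver finite bracket covers and finish via \cref{th:brack,th:consistent}. Your argument is cleaner and, as you observe, does not use differentiability at all---only continuity of $t\mapsto f_{T_o}(x,t)$ for each $x$, together with the separability argument for measurability of $\omega_\epsilon$ (the same separability the paper itself invokes for the measurability of the envelope). So your proof also shows that the theorem's differentiability hypothesis can be relaxed to joint continuity; the paper presumably states the stronger hypothesis because it makes the uniform-Lipschitz step elementary and parallels the smoothness assumed in \cref{th:liptodiff}.
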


\subsection{Consistency for Importance Sampling}\label{sec:consistency:for:importance:sampling}
In the special case of importance sampling, \cref{th:combo,th:liptodiff,th:compacT} can be further simplified.
In what follows we assume that all the assumptions made in \cref{sec:importance:sampling}, where we introduced importance sampling, are satisfied.

\begin{theorem}\label{th:comboimp}
Suppose that the set~\(T \) that indexes~\(\Pee\) is a bounded subset of\/~\(\reals^m\).
Consider any norm~\(\norm{\cdot}\) on~\(\reals^m\).
If there is some Borel measurable map~\(F\colon\reals\to\reals\) such that \(\int_\reals\abs{f(x)}F(x)\dx<+\infty\) and
\vspace{3pt}
\begin{equation*}
\abs{p_s(x)-p_t(x)}\leqslant\norm{s-t}F(x)\text{ for all \(s,t\in T\) and \(x\in\reals\),}\vspace{3pt}
\end{equation*}
then \(\Eoh_n^{\Pee/P}(f)\) is a strongly consistent estimator of\/~\(\Eo^{\Pee}(f)\).
\end{theorem}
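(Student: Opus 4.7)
The plan is to reduce Theorem~\ref{th:comboimp} directly to Theorem~\ref{th:combo}. Recall that the importance-sampling estimator $\Eoh_n^{\Pee/P}(f)$ defined in \cref{eq:standard:form:importance:sampling} is exactly an instance of the standard-form estimator $\Eoh_n^{\Pee}(f)$ from \cref{eq:standardform}, with the specific choice $f_t = f\, p_t/p$ from \cref{eq:functions:for:importance:sampling}. So it suffices to verify that the hypotheses of Theorem~\ref{th:combo} hold for this family, namely that $T$ is bounded in $\reals^m$ (which is given) and that there exists a Borel measurable $\tilde{F}\colon\reals\to\reals$ with $\norm{\tilde{F}}_{P,1}<+\infty$ satisfying the Lipschitz bound $\abs{f_s(x)-f_t(x)}\leqslant\norm{s-t}\tilde{F}(x)$ for all $s,t\in T$ and all $x\in\reals$.

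First, I would compute the pointwise Lipschitz estimate on $\supp p$. For $x\in\supp p$, $p(x)>0$ and the representatives $f_t(x) = f(x) p_t(x)/p(x)$ give
\begin{equation*}
\abs{f_s(x)-f_t(x)}
=\frac{\abs{f(x)}\,\abs{p_s(x)-p_t(x)}}{p(x)}
\leqslant\norm{s-t}\,\frac{\abs{f(x)}F(x)}{p(x)},
\end{equation*}
using the assumed Lipschitz bound on $p_t$. Off $\supp p$, the maps $f_t$ are not canonically defined and must be extended; since all extensions agree $P$-almost surely, I would simply choose the extension that sets $f_t\equiv 0$ on $\reals\setminus\supp p$ for every $t\in T$. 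With this choice $\abs{f_s(x)-f_t(x)}=0$ off $\supp p$, so the Lipschitz inequality holds trivially there with any non-negative bound. Defining
\begin{equation*}
\tilde{F}(x)\coloneqq
\begin{cases}
\abs{f(x)}F(x)/p(x) & \text{if }x\in\supp p,\\
0 & \text{otherwise,}
\end{cases}
\end{equation*}
we obtain a Borel measurable map for which the required Lipschitz inequality holds for all $x\in\reals$ and all $s,t\in T$.

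Second, I would verify $P$-integrability of $\tilde{F}$. Using the change of measure that underpins importance sampling itself,
\begin{equation*}
\norm{\tilde{F}}_{P,1}
=\int_{\supp p}\frac{\abs{f(x)}F(x)}{p(x)}\,p(x)\dx
=\int_{\supp p}\abs{f(x)}F(x)\dx
\leqslant\int_{\reals}\abs{f(x)}F(x)\dx<+\infty,
\end{equation*}
which is exactly the hypothesis of Theorem~\ref{th:comboimp}. Both conditions of Theorem~\ref{th:combo} are then satisfied, and strong consistency of $\Eoh_n^{\Pee/P}(f)$ for $\Eo^{\Pee}(f)$ follows.

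The only real obstacle is the bookkeeping around the extension of $f_t$ off $\supp p$: Theorem~\ref{th:combo} demands a Lipschitz inequality that holds for \emph{every} $x\in\reals$, not merely $P$-almost surely, so I have to be explicit that the flexibility noted in \cref{sec:importance:sampling} (picking any $P$-a.s.\ equal extension) permits choosing the zero extension, which makes the inequality trivially true on $\reals\setminus\supp p$. Everything else reduces to the short computation above and the change-of-variables identity.
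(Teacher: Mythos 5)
Your proposal is correct and follows essentially the same route as the paper's own proof: define $\tilde{F}=\abs{f}F/p$ where $p>0$ and $0$ elsewhere, extend each $f_t$ by zero off the support of $p$, check the Lipschitz bound and $\E^P(\tilde{F})\leqslant\int_\reals\abs{f(x)}F(x)\dx<+\infty$, and invoke Theorem~\ref{th:combo}. Your explicit remark that the everywhere (not just $P$-a.s.) Lipschitz requirement is met by choosing the zero extension is exactly the bookkeeping the paper also performs.
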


For the following theorem, we assume that there is some open set~\(T_c\) that includes the convex hull of~\(T\) and some Borel measurable map \(p_{T_c}\colon\reals\times T_c\to\reals\) such that \(p_t=p_{T_c}(\cdot,t)\) for every \(t\in T\).

\begin{theorem}\label{th:liptodiffimp}
Suppose that the set~\(T\) that indexes~\(\Pee\) is a bounded subset of\/~\(\reals^m\), and assume that \(p_{T_c}\) is differentiable with respect to its second argument~\(t\).
Consider any norm~\(\norm{\cdot}\) on~\(\reals^m\).
If there is some Borel measurable map~\(F\colon\reals\to\reals\) such that \(\int_{\reals}\abs*{f(x)}F(x)\dx<+\infty \) and
\vspace{3pt}
\begin{equation*}
\norm{\nabla_t p_{T_c}(x,t)}\leqslant F(x)\text{ for all \(x\in\reals\) and \(t\in T_c\)},\vspace{3pt}
\end{equation*} 
then \(\Eoh_n^{\Pee/P}(f)\) is a strongly consistent estimator of\/~\(\Eo^{\Pee}(f)\).
\end{theorem}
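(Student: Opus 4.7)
The plan is to reduce \cref{th:liptodiffimp} to \cref{th:liptodiff} by supplying the dominating map required there, tailored to the importance-sampling setup. With the choice \(f_t\coloneqq f\cdot p_t/p\) from \cref{eq:functions:for:importance:sampling}, the natural extension to~\(T_c\) is
\[
f_{T_c}(x,t)\coloneqq f(x)\,\frac{p_{T_c}(x,t)}{p(x)}\text{ for }x\in\supp p,
\]
and zero (or any Borel measurable extension) on the complement of~\(\supp p\), which is \(P\)-null. Under this extension the identifications \(\E^{P_t}(f)=\E^P(f_t)\) and \(\E^{P_t}(\abs{f})=\E^P(\abs{f_t})\) from \cref{sec:importance:sampling} are preserved.

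First I would check differentiability: on \(\supp p\) the factor \(f(x)/p(x)\) is independent of~\(t\), so the differentiability of \(p_{T_c}\) in~\(t\) transfers to \(f_{T_c}\) with
\[
\nabla_t f_{T_c}(x,t)=\frac{f(x)}{p(x)}\nabla_t p_{T_c}(x,t),
\]
while off \(\supp p\) the zero extension is trivially differentiable in~\(t\) with vanishing gradient. Next, the hypothesised bound \(\norm{\nabla_t p_{T_c}(x,t)}\leqslant F(x)\) yields, for all \(x\in\reals\) and \(t\in T_c\), the pointwise inequality \(\norm{\nabla_t f_{T_c}(x,t)}\leqslant G(x)\), where I define the envelope \(G(x)\coloneqq\abs{f(x)}F(x)/p(x)\) on \(\supp p\) and \(G(x)\coloneqq 0\) elsewhere. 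This \(G\) is Borel measurable as a ratio of Borel measurable maps on a Borel set, extended by zero.

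Finally, I would verify that \(\norm{G}_{P,1}<+\infty\) by a direct change of measure argument:
\[
\norm{G}_{P,1}=\int_{\supp p}\frac{\abs{f(x)}F(x)}{p(x)}\,p(x)\dx=\int_{\supp p}\abs{f(x)}F(x)\dx\leqslant\int_{\reals}\abs{f(x)}F(x)\dx<+\infty,
\]
by the integrability hypothesis. An appeal to \cref{th:liptodiff} then yields the strong consistency of \(\Eoh_n^{\Pee}(f)\) for \(\Eo^{\Pee}(f)\); this coincides with \(\Eoh_n^{\Pee/P}(f)\) of \cref{eq:standard:form:importance:sampling} for the present choice of~\(f_t\), which is the desired conclusion.

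The main obstacle is bookkeeping rather than a genuine mathematical hurdle: one must handle the extensions off \(\supp p\) so that \(f_{T_c}\) remains Borel measurable and differentiable in~\(t\) on all of \(\reals\times T_c\), while keeping the pointwise envelope bound valid everywhere. The zero extension accomplishes both, and the \(P\)-nullity of the complement of \(\supp p\) ensures that all \(P\)-expectations and \(P\)-a.s.\ statements are unaffected, so the passage through \cref{th:liptodiff} goes through verbatim.
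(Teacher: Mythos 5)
Your proposal matches the paper's proof essentially line for line: you extend $f_{T_c}$ by the zero map off the support of~$p$, observe $\nabla_t f_{T_c}(x,t)=\tfrac{f(x)}{p(x)}\nabla_t p_{T_c}(x,t)$, take the envelope $G=\abs{f}F/p$ (which is exactly the paper's~$\tilde F$), verify $\norm{G}_{P,1}\leqslant\int_\reals\abs{f(x)}F(x)\dx<+\infty$ by the change of measure, and invoke \cref{th:liptodiff}. Correct, and the same route as the paper.
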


Next, similarly to the previous section, we assume that there is some open set \(T_o\) that includes \(T\) and some Borel measurable map $p_{T_o}\colon\reals\times T_o\to\reals$ such that $p_{T_o}(x,t)\coloneqq p_t(x)$ for all $x\in\reals$ and $t\in T$.

\begin{theorem}\label{th:compacTimp}
Suppose that the set~\(T\) that indexes~\(\Pee\) is a compact subset of~\(\reals^m\), assume that \(p_{T_o}\) is continuously differentiable in both its arguments~\((x,t)\), that \(\int_{\reals}\sup_{t\in T}p_t(x)\dx<+\infty\), and that \(f\) is bounded.
Then \(\Eoh^{\Pee/P}(f)\) is a strongly consistent estimator of\/~\(\Eo^{\Pee}(f)\).
\end{theorem}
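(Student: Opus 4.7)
The plan is to prove that the class \(\FF=\cset{f_t}{t\in T}\), with \(f_t=f\cdot p_t/p\) as in \cref{eq:functions:for:importance:sampling}, is a strong Glivenko--Cantelli class for the central probability measure~\(P\); the consistency statement then follows immediately from \cref{th:consistent}. I will verify the Glivenko--Cantelli property directly via the bracketing criterion \cref{th:brack}, rather than trying to reduce to \cref{th:liptodiffimp}; the latter would require an integrable \(P\)-envelope for \(\norm{\nabla_t p_{T_o}(\cdot,t)}\) on a neighbourhood of~\(T\), which the hypotheses do not obviously supply.

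Fix \(\epsilon>0\). For each \(t_0\in T\) and \(\delta>0\), I define the envelopes
\begin{equation*}
\underline{f}_{t_0,\delta}(x)\coloneqq\inf_{t\in B(t_0,\delta)\cap T}f_t(x)
\text{ and }
\overline{f}_{t_0,\delta}(x)\coloneqq\sup_{t\in B(t_0,\delta)\cap T}f_t(x),
\end{equation*}
extended by~\(0\) outside \(\supp p\), where the ratios \(p_t/p\) are defined. Continuity of \(p_{T_o}\) makes \(t\mapsto f_t(x)\) continuous on~\(T\) for each \(x\in\supp p\), and separability of the metric space \(B(t_0,\delta)\cap T\) (as a subspace of~\(\reals^m\)) lets me replace these envelopes by a countable supremum and infimum over a dense sequence, yielding Borel measurability in~\(x\). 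Moreover, as \(\delta\downarrow 0\), the pointwise difference \(\overline{f}_{t_0,\delta}(x)-\underline{f}_{t_0,\delta}(x)\) tends to~\(0\) on \(\supp p\) by continuity in~\(t\), and is identically zero elsewhere.

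To shrink the \(L^1(P)\)-width of these brackets, I apply dominated convergence: the difference is bounded above by \(2\abs{f(x)}\sup_{t\in T}p_t(x)/p(x)\) \(P\)-almost surely, whose \(P\)-expectation is at most
\begin{equation*}
2\int_{\supp p}\abs{f(x)}\sup_{t\in T}p_t(x)\dx \leqslant 2M\int_{\reals}\sup_{t\in T}p_t(x)\dx < +\infty,
\end{equation*}
with \(M\) any bound for \(\abs{f}\). Hence \(\E^P\group[\big]{\overline{f}_{t_0,\delta}-\underline{f}_{t_0,\delta}}\to 0\) as \(\delta\downarrow 0\) for every \(t_0\in T\).

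To conclude, for each \(t_0\in T\) I pick \(\delta_{t_0}>0\) for which this expectation drops below~\(\epsilon\); the open balls \(B(t_0,\delta_{t_0}/2)\) cover the compact set~\(T\), so a finite subcover centred at \(t_1,\dots,t_N\) exists, and the associated brackets \([\underline{f}_{t_i,\delta_{t_i}},\overline{f}_{t_i,\delta_{t_i}}]\) cover~\(\FF\), since \(t\in B(t_i,\delta_{t_i}/2)\subseteq B(t_i,\delta_{t_i})\). \cref{th:brack} then yields the Glivenko--Cantelli property and \cref{th:consistent} closes the argument. The main obstacle I expect is keeping the envelopes Borel measurable while the sup/inf runs over an uncountable set, and handling the fact that \(f_t\) is only defined up to a \(P\)-null set off \(\supp p\); both issues are neutralised by continuity of \(p_{T_o}\) in~\(t\) combined with the separability of \(T\subset\reals^m\).
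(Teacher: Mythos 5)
Your proof is correct, and it takes a genuinely different route from the paper's. The paper splits $\reals$ into a compact window $[-C,C]$ (chosen via monotone convergence so that $\int_{\abs{x}\geqslant C}\sup_t p_t\,\dx$ is small) and a tail; on the window it invokes the continuous differentiability of $p_{T_o}$ on the compact set $[-C,C]\times T$ to obtain an explicit Lipschitz constant $K$ in~$t$, then sets the ball radius to $\delta=\epsilon/(4CKM)$ and adds up the window and tail contributions to the bracket width. You instead shrink the bracket widths by dominated convergence: for fixed $t_0$ the oscillation $\overline{f}_{t_0,\delta}-\underline{f}_{t_0,\delta}\downarrow 0$ $P$-a.e.\ as $\delta\downarrow 0$ by mere continuity of $t\mapsto p_t(x)$, and the $\delta$-free envelope $2\abs{f}\sup_{t\in T}p_t/p$ is $P$-integrable precisely because $\int_\reals\sup_t p_t\,\dx<+\infty$ and $f$ is bounded, so $\E^P\group{\overline{f}_{t_0,\delta}-\underline{f}_{t_0,\delta}}\to 0$ and compactness of $T$ yields a finite subcover of brackets. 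This is cleaner and, as you in effect point out, does not use continuous differentiability at all — only continuity of $p_{T_o}$ in its second argument (and Borel measurability in its first, for the countable-dense-subset argument giving measurability of the envelopes). The paper's more hands-on route buys an explicit, quantitative relationship between $\delta$, $\epsilon$, $C$, $K$ and $M$, which could be repurposed for rate results, whereas yours buys a weaker hypothesis and a shorter argument. Two small cosmetic notes: the factor $\delta_{t_0}/2$ in your subcover is superfluous, since covering $T$ with $\cset{B(t_0,\delta_{t_0})}{t_0\in T}$ already suffices; and you should say explicitly that you fix the extension $f_t\coloneqq 0$ off $\supp p$ (as you implicitly do) so that the pointwise bracketing inequalities $\underline{f}_{t_i,\delta_{t_i}}\leqslant f_t\leqslant\overline{f}_{t_i,\delta_{t_i}}$ required by \cref{th:brack} hold everywhere, not just $P$-almost everywhere.
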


\section{Examples}\label{sec:examples}
\subsection{Consistency in a Practical Example}\label{sec:example:of:consistency}
As a first illustration, we prove the convergence of importance sampling for an example by Fetz~\cite{fetz2016imprecise} involving a beam bedded on a spring; see~\cref{fig:spring}.
The aim is to calculate the upper failure probability in the form~\(\overline{P}(g(X)\leqslant0)\) for a given map~\(g\colon\reals\to\reals\) and a normally distributed real random variable~\(X\) with parameters~\((\mu,\sigma)\in T\coloneqq[\muu,\muo]\times[\sigmu,\sigmo]\).
The map~\(g\) is given by
\begin{equation*}
g(x)\coloneqq M_{\text{yield}}-\frac{qL^2}{4}\max\vset[\bigg]{\frac{(1-c(x))^2}{2},c(x)-\frac12}, 
\end{equation*}
with 
\begin{equation*}
c(x)\coloneqq\frac{5x}{384\frac{EI}{L^3}+8x},
\end{equation*}
with beam length~\(L\), a uniformly distributed load~\(q\), elastic moment~\(M_{\text{yield}}\) and beam rigidity~\(EI\), see~\cite{fetz2016imprecise} for more details.

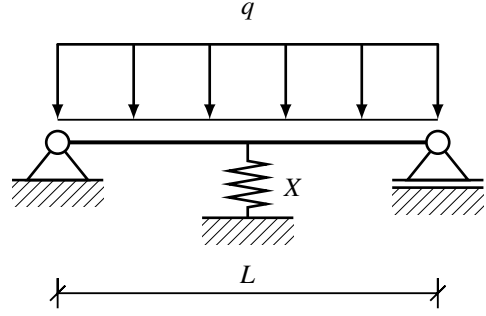
\begin{figure}[t] 
\floatconts
  {fig:spring}
  {\caption{A beam bedded on a spring.}}
  {
    \begin{tikzpicture}
    \point{a}{0}{0};
    \point{b}{5}{0};
    \point{c}{2.5}{0};
    \beam{2}{a}{b};
    \support{1}{a};
    \support{2}{b};
    \support{5}{c};
    \hinge{1}{a};
    \hinge{1}{b};
    \lineload{1}{a}{b};
    \notation{1}{c}{\(q\)}[above =1.5]
    \notation{1}{c}{\(X\)}[below right=0.5]
    \dimensioning{1}{a}{b}{-2}[\(L\)];
    \end{tikzpicture}
  }
\end{figure}

In our context we can rewrite the failure probability as \(P(g(X)\leqslant 0)=1-\E^P\group{\mathbb{I}_{\vset{g(X)>0}}}\),\footnote{The notation \(\mathbb{I}_A\) represents the indicator of a set \(A\).} for which we want to find the maximal value over the available distributions.
This can be rephrased as finding the lower expectation
\begin{equation*}
\inf_{(\mu,\sigma)\in T}\E^{P_{(\mu,\sigma)}}\group[\big]{\mathbb{I}_{g(X)>0}},
\end{equation*}
and then subtract that from~$1$.
We want to do importance sampling to find this lower expectation, and take the central distribution to be normally distributed with parameters \((\mu_o,\sigma_o)\) with \(\mu_o\in\reals\) and \(0<\sigma_o\).
With \(P_o\) we denote the probability measure of this distribution.
Observe that this pair of parameters \((\mu_o,\sigma_o)\) need not be contained in the parameter set \([\muu,\muo]\times[\sigmu,\sigmo]\).
For importance sampling, we define our set of Borel measurable maps as \(\FF=\cset{f_{(\mu,\sigma)}}{(\mu,\sigma)\in T}\) where, by \cref{eq:functions:for:importance:sampling}:
\begin{multline*}
f_{(\mu,\sigma)}\colon\reals\to\reals\colon\\ 
x\mapsto
\mathbb{I}_{\cset{y}{g(y)>0}}(x)
\frac{\sigma_o}{\sigma}
\exp\group[\Big]{-\frac{(x-\mu)^2}{2\sigma^2}+\frac{(x-\mu_o)^2}{2\sigma_o^2}}.
\end{multline*}
To prove consistency, we will show how multiple theorems can be used in decreasing complexity.
We will start with \cref{th:liptodiff}.
We are lucky, since \(T\) is already convex.
The map~\(f_{(\mu,\sigma)}\) is measurable in its argument~\(x\) since it is a product of the indicator of a level set of a measurable map and another measurable map.
Furthermore, it is clearly differentiable in the parameters~\(\mu\) and~\(\sigma>0\).
We verify in the Appendix \iftoggle{extendedversion}{}{of the arXiv version~\cite{extpaper}} that a map that can serve as the~\(F\) in \cref{th:liptodiff} is given by 
\begin{multline*}
F(x)\\
=\frac{\sigma_o}{\sigmu^2}
\begin{dcases}
\group[\Big]{\frac{(x-\muo)^2}{\sigmu^2}+1}
e^{\frac{(x-\mu_o)^2}{2\sigma_o^2}-\frac{(x-\muu)^2}{2\sigmo^2}}
&x<\muu\\
\group[\Big]{\frac{(\muo-\muu)^2}{\sigmu^2}+1}
e^{\frac{(x-\mu_o)^2}{2\sigma_o^2}}
&\muu\leqslant x<\muo\\
\group[\Big]{\frac{(x-\muu)^2}{\sigmu^2}+1}
e^{\frac{(x-\mu_o)^2}{2\sigma_o^2}-\frac{(x-\muo)^2}{2\sigmo^2}}
&\muo\leqslant x.
\end{dcases}\\[-5pt]
\end{multline*}
We can conclude that the estimator is consistent.

Alternatively, we can use \cref{th:liptodiffimp}.
We are again lucky, since \(T\) is still convex.
The map~\(p_{(\mu,\sigma)}\) is also measurable in its argument~\(x\) and clearly differentiable in the parameters~\(\mu\) and~\(\sigma>0\).
Using a similar argument as above, we verify in the Appendix \iftoggle{extendedversion}{}{of~\cite{extpaper}} that the following map can serve as the \(F\) in \cref{th:liptodiffimp}:
\begin{multline*}
F(x)\\
=\frac{1}{\sqrt{2\pi}\sigmu^2}
\begin{dcases}
\group[\Big]{\frac{(x-\muo)^2}{\sigmu^2}+1}
e^{-\frac{(x-\muu)^2}{2\sigmo^2}}
&x<\muu\\
\frac{(\muo-\muu)^2}{\sigmu^2}+1
&\muu\leqslant x<\muo\\
\group[\Big]{\frac{(x-\muu)^2}{\sigmu^2}+1}
e^{-\frac{(x-\muo)^2}{2\sigmo^2}}
&\muo\leqslant x.
\end{dcases}\\[-5pt]
\end{multline*}

Finally we will use \cref{th:compacTimp}.
The set \(T\) is a compact subset of \(\reals^2\) because it is closed and bounded and since \(f\) is an indicator, it is a bounded map.
Furthermore, \(p_{(\mu,\sigma)}\) is clearly continuously differentiable in the parameters~\(\mu\) and~\(\sigma>0\).
We show in the Appendix \iftoggle{extendedversion}{}{of~\cite{extpaper}} that
\begin{equation*}
\int_{\reals} \sup_{t\in T} p_t(x)\dx\leqslant \frac{1}{\sigmu}\group*{\sigmo+\frac1{\sqrt{2\pi}}(\muo-\muu)}<+\infty,
\end{equation*}
from which we can conclude consistency of the estimator by \cref{th:compacTimp}.

\subsection{An Example of No Consistency}\label{sec:example:of:no:consistency}
Next, we look at a theoretical example where the importance sampling estimator does not converge---shows no consistency.
We consider a map~\(f\) that is greater than one everywhere, and a central uniform distribution \(P\) on \([0,2]\), with density \(p\coloneqq\frac12 \mathbb{I}_{[0,2]}\).
We will construct a countably infinite set of distributions for which the technique of importance sampling does not work.
First, we define the set \(\text{Bi}\) of all finite binary sequences of even length with the same number of zeroes and ones.
Using this set, we define the countable set of densities
\begin{equation*}
D
\coloneqq\cset[\bigg]{\sum_{\ell=1}^{2k}a_{\ell}\mathbb{I}_{\left[\frac{\ell-1}{k},\frac{\ell}{k}\right]}}
{k\in\naturals\text{ and }(a_1,\dots,a_{2k})\in\text{Bi}},
\end{equation*}
and we denote the corresponding set of probability measures by~\(\Pee_D\).
In \cref{fig:densities}, two examples of densities in \(D\) are plotted.
The importance sampling method will not work in this example, because for every finite observed sample \(x_1,x_2,x_3,...,x_n\) there is a binary sequence of size \(2n\) that corresponds to a density that is zero on all the sampled values.
Consequently, the estimate \(\Eoh^{\Pee_D}(f)\) will be identically zero, yet the real lower expectation must be positive since the map~\(f\) is.

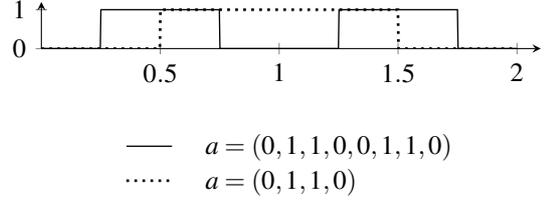
\begin{figure}[t]
\floatconts
  {fig:densities}
  {\caption{Some examples of densities in \(D\).}}
  {
    \begin{tikzpicture}
    \begin{axis}[
      height=2.2cm,
      width=\columnwidth,
      axis y line=left,
      axis x line=middle,
      ytick={0,1},
      ymax=1.2,
      xmax=2.1,
     legend style={draw=none,at={(0.5,-1.6)},anchor=north}
     ]

    \addplot[mark=none, domain=0:2,samples=800,style={line width=.5pt}]{ (x>0.25 && x<=0.75) || (x>1.25 && x<1.75) };
    \addplot[dotted,mark=none, domain=0:2,samples=800,style={{line width=1pt}}]{ (x>0.5 && x<1.5)  };

    \addlegendentry{\quad\(a=(0,1,1,0,0,1,1,0)\)}
    \addlegendentry{\quad\(a=(0,1,1,0)\phantom{,0,1,1,0}\)}

    \end{axis}
    \end{tikzpicture}
  }
\end{figure}

However, the importance sampling estimator will be consistent for lower expectations associated with finite subsets of \(D\), by Proposition~\ref{prop:finiT}.

So we find that, in contrast with finite sets of maps~\(\FF\), countable sets can break the consistency of the importance sampling estimator.
The reason that this happens is that there are only a finite number of samples.
If the set~\(\FF\) of maps~\(f_t\) is countably infinite, then it is possible that some differences between two such maps remain `unexplored' by finite sampling.

In summary then, the reason why consistency breaks in our example is because the sample is finite. 
In particular, the set~\(D\), or equivalently, the set~\(\text{Bi}\) is then large enough to make sure that there is always at least one density~\(p_a\) for some~\(a\in\text{Bi}\) that is zero in the sample.
But when we restrict ourselves to some finite subset of~\(D\), and if the sample is sufficiently large, then this finite set of densities will no longer be large enough to make sure that it will have at least one member that is zero in the sample.

\section{Conclusion}\label{sec:conclusion}
We have studied Monte Carlo estimators in the context of imprecise probabilities.
We have given a practical general form of an estimator of the lower expectation and two explicit constructions that lead to an estimator of this form: inverse transform sampling and importance sampling.
For this general form, we have investigated bias and consistency.
Under fairly non-restrictive assumptions, the estimator bias was proved to be negative and non-decreasing, or in other words, conservative and shrinking in absolute value with increasing sample size.
Consistency, on the other hand, cannot be proved in general: as we have shown in one of our examples, the estimator can remain inconsistent even for a countable set of distributions.
But we have investigated tools that can be used to prove consistency of our estimator.
For instance, we showed that the case where the `distributions' are Lipschitz continuous over a bounded set of parameters leads to consistent estimators.
For compact parameter sets, we further simplified this continuity condition.
In our first example, we showed how consistency can be proven for the importance sampling estimator used in~\cite{fetz2016imprecise}, in multiple ways.
The second example showed how consistency can fail even in the case of a countable set of of parameters.

Future work will deal with other limit laws for the Monte Carlo estimator~\eqref{eq:standardform} for lower expectations, and extension of these methods to an imprecise version of the Markov Chain Monte Carlo estimator.

\acks{
We would like to thank Matthias Troffaes for many stimulating discussions on the topic of Monte Carlo simulation for imprecise probabilities.
Furthermore, we thank the reviewers for their extensive and thoughtful comments.
}

\bibliography{isipta2019-template}

\iftoggle{extendedversion}{
\onecolumn
\clearpage
\appendix

\section{Proofs of Various Results in the Paper}
\subsection{Inverse Transform Sampling}

\begin{lemma}\label{lem:basisf}
For any cdf \(F\) and its pseudo-inverse \(F^{\dagger}\) defined as in \cref{cumin}, the following statements are true: 
\begin{enumerate}[label=\upshape(\roman*),leftmargin=*]
\item\label{lem:minac} In~\cref{cumin}, if \(x\in (0,1)\) then the infimum is real and achieved; 
\item\label{stat1} \((\forall x\in\reals) F^{\dagger}(F(x))\leqslant x\);
\item\label{stat2} \((\forall x\in (0,1)) F(F^{\dagger}(x))\geqslant x\);
\item\label{stat3} \((\forall x\in\reals)(\forall u\in(0,1)) u\leqslant F(x) \ifandonlyif F^{\dagger}(u) \leqslant x\).
\end{enumerate}
\end{lemma}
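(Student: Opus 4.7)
My plan is to prove the four items in order, as each subsequent item leverages the previous one, and the whole lemma reduces to careful bookkeeping with monotonicity and right-continuity of $F$.

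For item~(i), I would let $A_x \coloneqq \cset{y\in\reals}{x\leqslant F(y)}$ and show that for $x\in(0,1)$ this set is nonempty and bounded below. Nonemptiness follows from $\lim_{y\to+\infty}F(y)=1>x$, and boundedness below from $\lim_{y\to-\infty}F(y)=0<x$, so $\inf A_x\in\reals$. To see the infimum is attained, I would take a decreasing sequence $y_n\in A_x$ with $y_n\downarrow\inf A_x=:y^*$. By monotonicity $F(y^*)\leqslant F(y_n)$ and by right-continuity $F(y^*)=\lim_n F(y_n)\geqslant x$, so $y^*\in A_x$.

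Item~(ii) is essentially definitional: for any $x\in\reals$, the point $x$ itself trivially lies in $\cset{y\in\reals}{F(x)\leqslant F(y)}$, so the infimum defining $F^{\dagger}(F(x))$ is at most~$x$ (treating the degenerate cases $F(x)\in\{0,1\}$ separately, where the infimum is either $-\infty$ or bounded by $x$ directly). Item~(iii) is then immediate from item~(i): since the infimum is attained, $F^{\dagger}(x)\in A_x$, which by definition of $A_x$ is exactly the statement $F(F^{\dagger}(x))\geqslant x$.

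For item~(iv), I would prove both directions separately. The forward direction ($u\leqslant F(x)\Rightarrow F^{\dagger}(u)\leqslant x$) follows from the definition: the hypothesis says $x\in\cset{y}{u\leqslant F(y)}$, so its infimum $F^{\dagger}(u)$ is at most $x$. For the reverse, I would apply monotonicity of $F$ to $F^{\dagger}(u)\leqslant x$ to get $F(F^{\dagger}(u))\leqslant F(x)$, then chain this with item~(iii) to obtain $u\leqslant F(F^{\dagger}(u))\leqslant F(x)$.

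The main obstacle is really just item~(i), and specifically the use of right-continuity to conclude that the infimum is attained; the remaining items are short, standard manipulations once (i) is in hand. The restriction $x\in(0,1)$ in (i) and (iii) is essential, because at the endpoints $F^{\dagger}$ can equal $\pm\infty$, and extra care is needed when writing up (ii) to ensure the claim still holds in those boundary cases where $F(x)\in\{0,1\}$.
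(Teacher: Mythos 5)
Your proposal is correct and follows essentially the same route as the paper: define $A_x = \{y : x \leq F(y)\}$, establish it is nonempty and bounded below via the limits of $F$ at $\pm\infty$, use right-continuity and monotonicity on a decreasing sequence to show the infimum is attained, and derive the remaining items from the achieved-infimum fact together with elementary monotonicity arguments. The only cosmetic difference is in item~(iv), where you chain monotonicity of $F$ with item~(iii) for the reverse implication, whereas the paper invokes the up-set structure of $A(u)$ with achieved minimum $F^{\dagger}(u)$; both are equivalent observations.
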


\begin{proof}
For~\ref{lem:minac} and~\ref{stat2}, fix \(x\in(0,1)\) and let \(A(x)\coloneqq\cset{y\in\reals}{x \leqslant F(y)}\).
We will first prove that \(\inf A(x) \in\reals\) by contradiction, by deriving a contradiction for the two other cases: \(\inf A(x)=+\infty\) and \(\inf A(x)=-\infty\).

As a first case, suppose that \(\inf A(x)=+\infty\).
Since \(A(x)\) is a set of real numbers, this is only true if \(A(x)\) is the empty set.
Since \(x\in(0,1)\), we know that we can we can write \(x=1-\epsilon\) for some \(\epsilon>0\).
From the property of cdf's that
\(
\lim_{y\to +\infty} F(y)=1,
\)
it follows, by the definition of limits, that there is some real number \(R\) such that \(1-F(y)<\epsilon\) for all \(y>R\).
So \(x=1-\epsilon<F(y)\) for all \(y>R\), so all \(y>R\) are included in the set \(A(x)\), which means that it cannot be empty, a contradiction.

As a second case, suppose that \(\inf A(x)=-\infty\).
Since \(x\in(0,1)\), we know that \(x>0\).
From the property of cdf's that
\(
\lim_{y\to -\infty} F(y)=0,
\)
it follows, by the definition of limits, that there is some real number \(R\) such that if \(y<R\) then \(F(y)<x\), or equivalently, if \(F(y)\geq x\) then $y\geq R$.
This tells us that \(R\) is a real lower bound of the set \(A(x)\), whence \(\inf A(x)\geq R\), a contradiction.

Since \(F\) is non-decreasing, \(A(x)\) is an up-set, meaning that \(y_1\in A(x)\) and \(y_1 \leqslant y_2\) implies that \(y_2\in A(x)\).
Hence, since \(A(x)\) is a proper non-empty subset of \(\reals\), it is of the form \([F^{\dagger}(x),+\infty)\) or \((F^{\dagger}(x),+\infty)\), where \(F^\dagger(x)=\inf A(x)\) is real, as we have just shown.
If follows from the definition of an infimum that there is a non-increasing sequence \(y_n\) of elements of \(A(x)\) that converges to \(F^{\dagger}(x)\), so \(y_n \downarrow F^{\dagger}(x)\) and therefore also \(F(y_n)\downarrow F(F^{\dagger}(x))\), since \(F^\dagger(x)\) is real and \(F\) is right-continuous and non-decreasing. 
Hence, \(F(F^{\dagger}(x))=\lim_{n\to+\infty} F(y_n)\geqslant x\), and therefore \(F^{\dagger}(x)\in A(x)\) proving~\ref{stat2}, and also~\ref{lem:minac}.

For~\ref{stat1}, the definition of \(F^{\dagger}\) implies that, indeed, \(F^{\dagger}(F(x))=\inf \cset{ y\in\reals}{F(x)\leqslant F(y)}\leqslant x\).

Finally, for~\ref{stat3}, observe that
\begin{equation*}
u \leqslant F(x) 
\ifandonlyif x\in A(u) 
\ifandonlyif F^{\dagger}(u) \leqslant x,
\end{equation*}
where the first equivalence follows from the definition of \(A(u)\) above, and the second from~\ref{lem:minac} --- which guarantees that \(F^\dagger(u)=\min A(u)\) --- and the fact that \(A(u)\) is an up-set.
\end{proof}

\begin{theorem}
Consider a cdf \(F\colon\reals\to\reals\) and a uniformly distributed real random variable \(U\)on the unit interval \((0,1)\). 
If we define \(F^{\dagger}\) as in \cref{cumin}, then the real random variable \(X\coloneqq F^{\dagger}(U)\) has cdf \(F\). 
\end{theorem}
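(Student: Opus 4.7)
The plan is to verify directly that \(P(X \leqslant x) = F(x)\) for every \(x \in \reals\), which is exactly the statement that \(X = F^\dagger(U)\) has cdf \(F\). The technical work has already been done in \cref{lem:basisf}, so the proof amounts to stringing its parts together with the definition of the uniform distribution.

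First I would fix an arbitrary \(x \in \reals\) and observe that since \(U\) is uniformly distributed on \((0,1)\), it takes values in \((0,1)\) with probability one. This puts us in the regime where statement~(iii) of \cref{lem:basisf} applies, giving the almost-sure equivalence \(F^\dagger(U) \leqslant x \ifandonlyif U \leqslant F(x)\). Hence the events \(\{X \leqslant x\}\) and \(\{U \leqslant F(x)\}\) coincide up to a \(P\)-null set, and so have the same probability. Then I would invoke the defining property of the uniform distribution on \((0,1)\): for every \(t \in [0,1]\) we have \(P(U \leqslant t) = t\). Applying this with \(t = F(x) \in [0,1]\) gives \(P(U \leqslant F(x)) = F(x)\), and chaining the two identities yields \(P(X \leqslant x) = F(x)\), as required.

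I do not foresee any serious obstacle: the whole argument is essentially a one-line corollary of \cref{lem:basisf}(iii). The only care needed is that the equivalence in that lemma is stated only for \(u \in (0,1)\), so one must explicitly use that \(U\) avoids the endpoints \(\{0,1\}\) with probability one; this is immediate from \(U\) being uniformly distributed on the open interval.
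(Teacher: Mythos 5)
Your proof is correct and follows essentially the same route as the paper: both reduce the problem to Lemma~\ref{lem:basisf}\ref{stat3} to rewrite the event \(\{F^\dagger(U)\leqslant x\}\) as \(\{U\leqslant F(x)\}\), and then evaluate the probability of the latter via the uniform distribution (the paper phrases this as a Lebesgue measure computation restricted to \((0,1)\), which is the same thing). Your explicit remark that \(U\) avoids the endpoints \(\{0,1\}\) with probability one matches the paper's implicit restriction of the Lebesgue measure to \((0,1)\).
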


\begin{proof}
The cdf of \(X\) is given by, for any $x\in\reals$:
\begin{align*}
F_X(x)
&\coloneqq P(X\leqslant x)=P(F^{\dagger}(U)\leqslant x)\\
&=\lambda\group{\cset{u\in(0,1)}{F^{\dagger}(u)\leqslant x}}
=\lambda\group{\cset{u\in(0,1)}{u\leqslant F(x)}}
=\lambda\group{(0,F(x)]}\\
&=F(x),
\end{align*}
where \(\lambda\) is the Lebesgue measure relative to \((0,1)\), and the fourth equality follows from Lemma~\ref{lem:basisf}\ref{stat3}.
\end{proof}

\subsection{Estimator Properties}
In the main body of the paper we only needed the outer cover \(f^*\) of a map \(f\colon \reals^\infty\to\reals\). 
For the following results however, we will also need inner covers, which we define as \(f_*\coloneqq-(-f)^*\) in the corresponding probability space.

\begin{lemma}[Super-additivity of inner expectations] \label{lem:subad}
Consider two maps \(f,g\colon\reals^{\infty}\to\reals\cup\vset{-\infty}\) for which\/ \(\Eo^\infty(f)+\Eo^\infty(g)\) is well-defined, meaning that \(\Eo^\infty(f)<+\infty\) or \(\Eo^\infty(g)>-\infty\), and \(\Eo^\infty(f)>-\infty\) or \(\Eo^\infty(g)<+\infty\).
Then\/ \(f+g\) is well-defined and assumes values in \(\reals\cup\vset{-\infty}\), and\/ \(\Eo^\infty(f)+\Eo^\infty (g)\leqslant\Eo^\infty(f+g)\).
\end{lemma}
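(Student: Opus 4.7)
My plan rests on the dual characterization that follows immediately from the paper's definition $\Eo^\infty\coloneqq-\Eu^\infty(-\cdot)$: substituting $g\mapsto-k$ in the infimum defining $\Eu^\infty(-h)$ shows, for every $h\colon\reals^\infty\to\extendedreals$, that $\Eo^\infty(h)$ equals the supremum of $\E^\infty(k)$ over all measurable maps $k\colon\reals^\infty\to\extendedreals$ with $k\leqslant h$ for which $\E^\infty(k)$ exists.

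I would first dispose of the trivial case where $\Eo^\infty(f)=-\infty$ or $\Eo^\infty(g)=-\infty$: the well-definedness hypothesis then forces the sum to be $-\infty$, so the inequality is vacuous. In the remaining case I may assume $\Eo^\infty(f),\Eo^\infty(g)>-\infty$, whence the hypothesis permits at most one of the two to equal $+\infty$.

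Next I would pick sequences of measurable minorants $k_n\leqslant f$ and $l_n\leqslant g$ with existing expectations such that $\E^\infty(k_n)\to\Eo^\infty(f)$ and $\E^\infty(l_n)\to\Eo^\infty(g)$, arranged so that $\E^\infty(k_n),\E^\infty(l_n)>-\infty$ for every $n$---possible since the targets themselves exceed $-\infty$. Since $f$ and $g$, and hence $k_n$ and $l_n$, take values in $\reals\cup\vset{-\infty}$, the sum $k_n+l_n$ is unambiguously well-defined, measurable, and satisfies $k_n+l_n\leqslant f+g$, with $f+g$ itself also unambiguous and in $\reals\cup\vset{-\infty}$. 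Because $\E^\infty(k_n^-),\E^\infty(l_n^-)<+\infty$ (a consequence of the full expectations exceeding $-\infty$) and $(k_n+l_n)^-\leqslant k_n^-+l_n^-$, the expectation $\E^\infty(k_n+l_n)$ exists; standard linearity of $\E^\infty$---applicable since $\E^\infty(k_n)+\E^\infty(l_n)$ never yields $(+\infty)+(-\infty)$---then gives $\E^\infty(k_n+l_n)=\E^\infty(k_n)+\E^\infty(l_n)$. Combining with $k_n+l_n\leqslant f+g$ yields $\E^\infty(k_n)+\E^\infty(l_n)\leqslant\Eo^\infty(f+g)$, and passing to the limit closes the proof.

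The main obstacle is the extended-real bookkeeping, especially when one inner expectation equals $+\infty$; the uniform restriction $\E^\infty(k_n),\E^\infty(l_n)>-\infty$ on the approximants is precisely what simultaneously secures existence of $\E^\infty(k_n+l_n)$, applicability of linearity of $\E^\infty$, and well-definedness of the limiting sum.
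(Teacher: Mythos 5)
Your proof is correct, but it takes a genuinely different route from the paper's. You work on the ``inner'' side throughout: you first record the dual characterisation \(\Eo^\infty(h)=\sup\cset{\E^\infty(k)}{k\leqslant h,\ k\text{ measurable},\ \E^\infty(k)\text{ exists}}\), then approximate \(\Eo^\infty(f)\) and \(\Eo^\infty(g)\) by measurable minorants \(k_n\leqslant f\), \(l_n\leqslant g\) with \(\E^\infty(k_n),\E^\infty(l_n)>-\infty\), check that \(\E^\infty(k_n+l_n)\) exists and equals \(\E^\infty(k_n)+\E^\infty(l_n)\), and pass to the limit. The paper instead argues on the outer side via minimal measurable covers: it shows \(\Eu^\infty(-f)\), \(\Eu^\infty(-g)\) and \(\Eu^\infty(-f-g)\) are all \(<+\infty\), so Lemma~\ref{lem:mescov} supplies covers \((-f)^*\), \((-g)^*\), \((-f-g)^*\) realising these outer expectations, and then invokes the subadditivity of minimal covers (\cite[Lemma~1.2.2(i)]{van1996weak}: \((-f-g)^*\leqslant(-f)^*+(-g)^*\) a.s.) before taking expectations and negating. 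Your approach is more elementary and self-contained---it needs neither the existence of minimal covers nor the cited a.s.\ comparison lemma, only the standard additivity of \(\E^\infty\) for extended-real measurable maps whose expectations do not produce \((+\infty)+(-\infty)\)---while the paper's argument reuses the cover machinery it has already set up for the rest of Section~3. Both handle the degenerate case (one inner expectation equal to \(-\infty\)) identically. One small slip: your parenthetical claim that the hypothesis ``permits at most one of the two to equal \(+\infty\)'' is not what the well-definedness condition says---both inner expectations may be \(+\infty\) simultaneously---but this remark is never used: your choice of minorants, the linearity step and the limit passage all go through in that case as well, yielding \(\Eo^\infty(f+g)=+\infty\) and the trivially satisfied inequality.
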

\begin{proof}

The first statement is immediate, so we turn to the proof of the inequality.
We consider two cases: for the first case we assume that both \(\Eo^\infty(f)>-\infty\) and  \(\Eo^\infty(g)>-\infty\) and for the second case we assume, without loss of generality, that \(\Eo^\infty(f)=-\infty\).

For the first case, we infer from the assumption that \(\Eo^\infty(f)>-\infty\) and \(\Eo^\infty(g)>-\infty\), and the relation between the inner and the outer expectation, that \(\Eu^\infty(-f)<+\infty\) and \(\Eu^\infty(-g)<+\infty\). 
Lemma~\ref{lem:mescov} then tells us that \((-f)\) and \((-g)\) have respective outer measurable covers \((-f)^*\) and \((-g)^*\), and that \(\Eu^\infty(-f)=\E^{\infty}((-f)^*)\) and \(\Eu^\infty(-g)=\E^{\infty}((-g)^*)\).

We will now prove that \(-f-g\) has a measurable cover \((-f-g)^*\) for which \(\Eu^\infty(-f-g)=\E^\infty((-f-g)^*)\).
By Lemma~\ref{lem:mescov} suffices to show that \(\Eu^\infty(-f-g)<+\infty\), where 
\begin{equation*}
\Eu^\infty(-f-g)
=\inf\cset[\big]{\E^{\infty}(h)}
{-f-g\leqslant h\in \extendedreals^{\reals^\infty}\text{ measurable and }\E^{\infty}(h)\text{ exists}}.
\end{equation*}
\((-f)^*+(-g)^*\) is measurable and, by definition \(-f\leqslant(-f)^*\) and \(-g\leqslant(-g)^*\), so \(-f-g\leqslant(-f)^*+(-g)^*\).
Furthermore, since \(\E^\infty((-f)^*)=\Eu^\infty(-f)<+\infty\) and \(\E^\infty((-g)^*)=\Eu^\infty(-g)<+\infty\), the expectation \(\E^\infty(((-f)^*+(-g)^*)^+)\leqslant\E^\infty(((-f)^*)^+) + \E^\infty(((-g)^*)^+)<+\infty\),\footnote{\dots since for all \(r,s \in\reals^{\reals^\infty}\), \((r+s)^+=\max\{r+s,0\}\leqslant\max\{r,0\}+\max\{s,0\}=r^+ + s^+\)} so \(\E^\infty((-f)^*+(-g)^*)\) exists.
We can conclude that
\begin{equation*}
\E^\infty((-f)^*+(-g)^*) \in \cset{\E^{\infty}(h)}
{-f-g\leqslant h\in \extendedreals^{\reals^\infty}\text{ measurable and }\E^{\infty}(h)\text{ exists}},
\end{equation*}
and therefore
\begin{equation*}
\Eu^\infty(-f-g)\leqslant\E^\infty((-f)^*+(-g)^*)=\E^\infty((-f)^*)+\E^\infty((-g)^*)<+\infty,
\end{equation*}
as needed.
So, indeed, \(-f-g\) has a measurable cover \((-f-g)^*\), for which \(\Eu^\infty(-f-g)=\E^\infty((-f-g)^*)\).

We can now apply Lemma~1.2.2(i) in \cite{van1996weak} to \((-f)\) and \((-g)\) to find that
\((-f)^*+(-g)^*\geqslant(-f-g)^*\) a.s., and therefore also \(-(-f)^*-(-g)^*\leqslant-(-f-g)^*\) a.s.,
so after taking the expectation on both sides, we get
\begin{equation*}
\E^\infty(-(-f)^*-(-g)^*)\leqslant\E^\infty(-(-f-g)^*)
\end{equation*}
and therefore also
\begin{equation*}
\E^\infty((-f-g)^*)\leqslant\E^\infty((-f)^*+(-g)^*)=\E^\infty((-f)^*)+\E^\infty((-g)^*),
\end{equation*}
where the equality, as before, follows from the linearity of expectations, and \(\E^\infty((-f)^*)=\Eu^\infty(-f)<+\infty\) and \(\E^\infty((-g)^*)=\Eu^\infty(-g)<+\infty\).
Since, by definition, \(\Eo^{\infty}(f)=-\Eu^{\infty}(-f)=-\E^{\infty}\group{(-f)^*}\), and similarly for the other expectations, this indeed leads to the desired inequality \(\Eo^\infty(f)+\Eo^\infty(g)\leqslant\Eo^\infty(f+g)\).

For the second case, we infer from the assumption that \(\Eo^\infty(f)=-\infty\) and that \(\Eo^\infty(f)+\Eo^\infty(g)\) is well-defined, that \(\Eo^\infty (g)<+\infty\).
Hence, \(\Eo^\infty(f)+\Eo^\infty (g)=-\infty\), and the desired inequality is trivially true.
\end{proof}

\subsubsection{Proof of Theorem~\ref{th:decrimp}} 

\begin{proofof}{\cref{th:decrimp}}
Later in this proof we will want to calculate inner and outer expectations of, and apply Lemma~\ref{lem:subad} to, combinations of the maps \(\Eoh_{n}^{\Pee}(f)\) for any positive integer \(n\)---for the sake of simplicity, we will use the notation \(g_n=g(X_{1:n}^P)\coloneqq\Eoh_{n}^{\Pee}(f)\).
In order to be allowed to do this, we want to show here that \(-\infty<\Eo^{\infty}(g_n)\), because this guarantees the existence of a measurable inner cover \(\group{g_n}_*\) such that \(\Eo^{\infty}(g_n)=\E^{\infty}(\group{g_n}_*)\), by Lemma~\ref{lem:mescov}.
We give a proof by induction.
First of all, it follows from our assumption that \(-\infty<\Eo^{\infty}(\inf_{t\in T}f_t)=\Eo^{\infty}(g_1)\), that the statement holds for \(n=1\).
Next, we assume that the statement holds for \(n=k\), and we show that it then also holds for \(n=k+1\).
So suppose that \(-\infty<\Eo^{\infty}\group{g_k}\), then we are guaranteed that \(g_k\) has a measurable inner cover \(\group{g_k}_*\), and that \(\Eo^{\infty}(g_k)=\E^{\infty}(\group{g_k}_*)\).
Similarly, by the construction of the underlying probability space, it follows that \(-\infty<\Eo^{\infty}\group{g_1(X_{1}^P)}=\Eo^{\infty}\group{g_1(X_{k+1}^P)}\) and this guarantees that \(g_1(X_{k+1}^P)\) has a measurable inner cover \(\group{g_1}_*\), and that \(\Eo^{\infty}(g_1(X_{k+1}^P))=\E^{\infty}(\group{g_1}_*)\).
Then \(h_{k+1}\coloneqq\frac1{k+1}\group{g_1}_*+\frac{k}{k+1}\group{g_k}_*\) is a measurable map, and
\begin{equation*}
\E^{\infty}\group{h_{k+1}}
=\E^{\infty}\group[\bigg]{\frac1{k+1}\group{g_1}_*+\frac{k}{k+1}\group{g_k}_*}
\geqslant\frac1{k+1}\E^{\infty}\group{\group{g_1}_*}+\frac{k}{k+1}\E^{\infty}\group{\group{g_k}_*}
>-\infty.
\end{equation*}
Moreover, it follows from the definition of a measurable inner cover that
\begin{align*}
h_{k+1}
\leqslant\frac1{k+1}\inf_{t\in T}f_t(X_{k+1}^P)+\frac{k}{k+1}\Eoh_{k}^{\Pee}(f)
&=\frac1{k+1}\inf_{t\in T}f_t(X_{k+1}^P)+\frac{k}{k+1}\inf_{t\in T}\frac1k\sum_{\ell=1}^{k}f_t(X_{\ell}^P)\\
&\leqslant\inf_{t\in T}\frac1{k+1}\group[\bigg]{f_t(X_{k+1}^P)+\sum_{\ell=1}^{k}f_t(X_{\ell}^P)}
=g_{k+1}(X_{1:k+1}^P),
\end{align*}
which implies that, indeed, \(\Eo^{\infty}\group{g_{k+1}}\geqslant\E^{\infty}\group{h_{k+1}}>-\infty\), and that, therefore, \(g_{k+1}\) also has a measurable inner cover.

We are now ready for the proof of the theorem.
First, we prove that 
\begin{equation*}
\Eo^{\infty}\group[\Big]{\Eoh_n^{\Pee}(f)}
\geqslant
\Eo^{\infty}\group[\Big]{\Eoh_{n-1}^{\Pee}(f)}.
\end{equation*}
We rewrite
\begin{equation*}
\Eoh_n^{\Pee}(f)
=\inf_{t\in T}\frac1n\sum_{k=1}^{n}f_t(X^P_k)
=\inf_{t\in T}\frac1n\sum_{k=1}^{n}\frac1{n-1}\sum_{\substack{j=1\\j\neq k}}^{n}f_t(X^P_j)
\geqslant\frac1n\sum_{k=1}^{n}\inf_{t\in T}\frac1{n-1}\sum_{\substack{j=1\\j\neq k}}^{n}f_t(X^P_j).
\end{equation*}
Now we take the lower expectation \(\Eo^{\infty}\) of both sides of the inequality to get\footnote{\label{fn:monotonicity}It is clear from their definitions near the end of Section~\ref{sec:imprecise:monte:carlo} that both the upper and lower expectations are monotone.}
\begin{align*}
\Eo^{\infty}\group[\Big]{\Eoh_n^{\Pee}(f)}
&\geqslant\Eo^{\infty}\group[\bigg]{\frac1n\sum_{k=1}^{n}\inf_{t\in T}\frac1{n-1}\sum_{\substack{j=1\\j\neq k}}^{n}f_t(X^P_j)}
\geqslant\frac1n\sum_{k=1}^{n}\Eo^{\infty}\group[\bigg]{\inf_{t\in T}\frac1{n-1}\sum_{\substack{j=1\\j\neq k}}^{n}f_t(X^P_j)}\\
&=\frac1n\sum_{k=1}^{n}\Eo^{\infty}\group[\Big]{\Eoh_{n-1}^{\Pee}(f)}
=\Eo^{\infty}\group[\Big]{\Eoh_{n-1}^{\Pee}(f)},
\end{align*}
where the second inequality follows from Lemma~\ref{lem:subad} and the first equality from the fact that the underlying probability space is an independent product space.

Secondly, we want to prove that
\begin{equation*}
\Eo^{\infty}\group[\Big]{\Eoh_n^{\Pee}(f)}
\leqslant 
\Eu^{\infty}\group[\Big]{\Eoh_n^{\Pee}(f)}.
\end{equation*}
We distinguish between two cases.
The first case is where \(\Eu^{\infty}\group[\big]{\Eoh_n^{\Pee}(f)}=+\infty\) and then the inequality holds trivially.
In the second case, where \(\Eu^{\infty}\group[\big]{\Eoh_n^{\Pee}(f)}<+\infty\), we are guaranteed the existence of a measurable outer cover \((g_n)^*\) for which \(\Eu^\infty(g_n)=\E^\infty((g_n)^*)\).
Since \(\Eoh_n^{\Pee} (f)\) was proved above to also have a measurable inner cover \((g_n)_*\), we can use their definitions to find
\begin{equation*}
(g_n)_*\leqslant g_n\leqslant (g_n)^*,
\end{equation*}
and by taking the expectation on both measurable sides, we find that, indeed,
\begin{equation*}
\Eo^{\infty}\group[\Big]{\Eoh_n^{\Pee}(f)}
=\E^\infty((g_n)_*)
\leqslant\E^\infty((g_n)^*)
=\Eu^{\infty}\group[\Big]{\Eoh_n^{\Pee}(f)}.
\end{equation*}

Thirdly, we prove that
\begin{equation*}
\Eu^{\infty}\group[\Big]{\Eoh_n^{\Pee}(f)}
\leqslant 
\Eo^{\Pee}(f).
\end{equation*}
We start again from the left-hand side:
\begin{align*}
\Eu^{\infty}\group[\Big]{\Eoh_n^{\Pee}(f)}
&=\Eu^{\infty}\group[\bigg]{\inf_{t\in T}\frac1n\sum_{k=1}^nf_t(X_k^P)}\\
&\leqslant\inf_{t\in T}\Eu^{\infty}\group[\bigg]{\frac1n\sum_{k=1}^nf_t(X_k^P)}
=\inf_{t\in T}\E^\infty\group[\bigg]{\frac1n\sum_{k=1}^nf_t(X_k^P)}
=\inf_{t\in T}\E^P\group{f_t}
=\inf_{t\in T}\E^{P_t}\group{f}
=\Eo^{\Pee}(f),
\end{align*}
where the first inequality follows from the monotonicity of the upper expectation.\footnote{See also footnote~\ref{fn:monotonicity}.}
\end{proofof}

\subsubsection{Proof of Theorem~\ref{th:consistent}}
First, we prove an auxiliary lemma.

\begin{lemma}\label{lem:supinf}
Consider a non-empty set \(A\) and two maps \(\phi,\psi\colon A \to\reals\), at least one of which is bounded below. 
Then
\begin{equation*}
\abs[\bigg]{\inf_{a\in A}\phi(a)-\inf_{a\in A}\psi(a)} 
\leqslant\sup_{a\in A}\abs*{\phi(a)-\psi(a)}.
\end{equation*}
\end{lemma}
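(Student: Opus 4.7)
The plan is to reduce the statement to the straightforward pointwise comparison between $\phi$ and $\psi$, while carefully dealing with the possibility that the involved infima could be $-\infty$. Let $M \coloneqq \sup_{a \in A}\abs{\phi(a)-\psi(a)}$. If $M = +\infty$, the inequality is immediate, so assume from now on that $M$ is finite.

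Next I would handle the well-definedness of the left-hand side. Without loss of generality, thanks to the symmetric roles of $\phi$ and $\psi$ in the claim, assume that $\psi$ is bounded below, so that $\inf_{a\in A}\psi(a) \in \reals$. From $M<+\infty$, the inequality $\psi(a)-M \leqslant \phi(a)$ holds for every $a\in A$, so $\phi$ is bounded below as well and $\inf_{a\in A}\phi(a) \in \reals$. Since $A$ is non-empty and $\phi, \psi$ are real-valued, neither infimum can equal $+\infty$, so the difference on the left-hand side is a well-defined real number.

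The main step is then the chain of pointwise inequalities $\phi(a) \leqslant \psi(a) + M$ and $\psi(a) \leqslant \phi(a) + M$, valid for all $a\in A$. Taking the infimum over $a\in A$ on both sides of each inequality yields
\begin{equation*}
\inf_{a\in A}\phi(a) \leqslant \inf_{a\in A}\psi(a) + M
\quad\text{and}\quad
\inf_{a\in A}\psi(a) \leqslant \inf_{a\in A}\phi(a) + M,
\end{equation*}
which together are equivalent to the required bound. I do not expect any real obstacle here; the only subtlety worth flagging is the bookkeeping around the `bounded below' hypothesis, which is exactly what prevents the two infima from being $-\infty$ simultaneously and thus avoids an ill-defined $(-\infty)-(-\infty)$ on the left-hand side.
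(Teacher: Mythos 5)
Your proof is correct, and it reaches the same elementary conclusion as the paper but with a different case decomposition. The paper splits on whether one of the infima equals \(-\infty\): in that degenerate case it constructs a sequence \(a_n\) with \(\phi(a_n)\downarrow-\infty\) and argues that \(\psi(b_n)-\phi(b_n)\to+\infty\) along a tail, so the right-hand side is \(+\infty\); in the finite case it derives the bound through a chain of \(\max\)/\(\sup\)/\(\inf\) identities. You instead split on whether \(M\coloneqq\sup_{a\in A}\abs{\phi(a)-\psi(a)}\) is infinite, and when \(M<+\infty\) you observe that the pointwise bound \(\phi\geqslant\psi-M\) (with \(\psi\) bounded below, by symmetry) forces both infima to be real --- this is essentially the contrapositive of the paper's degenerate case and eliminates the sequence argument entirely. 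Your finite-case step, the two one-sided inequalities \(\inf_{a\in A}\phi(a)\leqslant\inf_{a\in A}\psi(a)+M\) and \(\inf_{a\in A}\psi(a)\leqslant\inf_{a\in A}\phi(a)+M\), is the same underlying fact as the paper's identity chain, just packaged more economically. The only point worth making fully explicit is that in the case \(M=+\infty\) the left-hand side is a well-defined extended real because at least one infimum is finite (neither can be \(+\infty\) since \(A\neq\emptyset\) and the maps are real-valued); you flag exactly this at the end, and the paper makes the same remark, so there is no gap.
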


\begin{proof}
We will look at two cases.
The first where one of the infima \(\inf_{a\in A}\phi(a)\) and \(\inf_{a\in A}\psi(a)\) is \(-\infty\) and the other where none of them are.
In the first case, we may assume without loss of generality \(\inf_{a\in A}\phi(a)=-\infty\), and so it follows from the assumptions that \(-\infty<\inf_{a\in A}\psi(a)\).
It follows from \(\inf_{a\in A}\phi(a)=-\infty\) that there is some sequence \(a_n\) in \(A\) such that \(\phi(a_n)\downarrow-\infty\). 
Hence, there is some positive integer \(N\) such that for every integer \(n\geq N\) it holds that \(\phi(a_n)<\inf_{a\in A}\psi(a)\).
If we consider the sequence \(b_n\coloneqq a_{N+n}\), then it follows that \(\lim_{n\to\infty} \abs*{\phi(b_n)-\psi(b_n)}=\lim_{n\to\infty} (\psi(b_n)-\phi(b_n))=+\infty\). 
This implies that \(\sup_{a\in A}\abs*{\phi(a)-\psi(a)}=+\infty\), and since the left-hand side of the desired inequality is well-defined --- an  extended real number --- because of the theorem's assumptions, the desired inequality holds trivially.

In the second case, both \/ \(-\infty<\inf_{a\in A}\phi(a)\) and \(-\infty<\inf_{a\in A}\psi(a)\), so both infima are real numbers.
Then
\begin{align*}
\abs[\bigg]{\inf_{a\in A}\phi(a)-\inf_{a\in A}\psi(a)}
&=\max\vset[\bigg]{\inf_{a\in A}\phi(a)-\inf_{a\in A}\psi(a),\inf_{a\in A}\psi(a)-\inf_{a\in A}\phi(a)}\\
&=\max\vset[\bigg]{-\inf_{a\in A}\group[\Big]{\psi(a)-\inf_{b\in A}\phi(b)},-\inf_{a\in A}\group[\Big]{\phi(a)-\inf_{b\in A}\psi(b)}}\\
&=\max\vset[\bigg]{\sup_{a\in A}\group[\Big]{\inf_{b\in A}\phi(b)-\psi(a)},\sup_{a\in A}\group[\Big]{\inf_{b\in A}\psi(b)-\phi(a)}}\\
&=\sup_{a\in A}\max\vset[\bigg]{\inf_{b\in A}\phi(b)-\psi(a),\inf_{b\in A}\psi(b)-\phi(a)}\\
&\leqslant\sup_{a\in A}\max\vset{\phi(a)-\psi(a),\psi(a)-\phi(a)}
=\sup_{a\in A}\abs*{\phi(a)-\psi(a)},
\end{align*}
which completes the proof.
\end{proof}

We are now ready to prove \cref{th:consistent}.

\begin{proofof}{\cref{th:consistent}}
That \(\FF\) is a (strong) Glivenko--Cantelli class for \(P\) means that 
\begin{equation} \label{eq:supcon}
P^{\infty}\left(\lim_{n\to +\infty}\left(\sup_{t\in T} \abs*{ \hat{\E}^{P}_n(f_t)-\E^{P}(f_t) }\right)^*=0 \right)=1.
\end{equation}
Fix any $n\in\naturals$.
By the $\Pee$-integrability assumption for $f$ and the definition of the~\(f_t\), we have that
\begin{equation*}
-\infty
<-\Eu^{\Pee}(\abs{f})
=\Eo^{\Pee}(-\abs{f})
\leq\Eo^{\Pee}(f)
=\inf_{t\in T}\E^{P_t}(f)=\inf_{t\in T}\E^P(f_t).
\end{equation*}
Hence, we can apply Lemma~\ref{lem:supinf}, and find
\begin{equation*}
\sup_{t\in T}\abs*{\hat{\E}^{P}_n(f_t)-\E^{P}(f_t)}
\geqslant
\abs*{\inf_{t\in T}\hat{\E}^{P}_n(f_t)-\inf_{t\in T}\E^{P}(f_t)}
=\abs*{\Eoh^{\Pee}_n(f)-\inf_{t\in T}\E^{P_t}(f)}
=\abs*{\Eoh^{\Pee}_n(f)-\Eo^{\Pee}(f)}
\geqslant0,
\end{equation*}
where the first equality follows from \cref{eq:standardform} and the definition of the $f_t$.
The first part of the definition of a minimal measurable cover now implies that
\begin{equation*}
\left(\sup_{t\in T}\abs*{\hat{\E}^{P}_n(f_t)-\E^{P}(f_t)}\right)^*
\geqslant
\sup_{t\in T}\abs*{\hat{\E}^{P}_n(f_t)-\E^{P}(f_t)}
\geqslant 
\abs*{\Eoh^{\Pee}_n(f)-\Eo^{\Pee}(f)}.
\end{equation*}
Since the left-hand side of this inequality is measurable, the second part of the definition of a minimal measurable cover implies that
\begin{equation*}
\left(\sup_{t\in T}\abs*{\hat{\E}^{P}_n(f_t)-\E^{P}(f_t)}\right)^* 
\geqslant 
\abs*{\Eoh^{\Pee}_n(f)-\Eo^{\Pee}(f)}^*\quad\text{almost surely}. 
\end{equation*}
Combined with \cref{eq:supcon}, we find that
\begin{equation*}
P^{\infty}\left(\lim_{n\to+\infty}\abs*{\Eoh^{\Pee}_n(f)-\Eo^{\Pee}(f)}^*=0\right)=1,
\end{equation*}
because a countable union of null sets is still null.
\end{proofof}

\subsubsection{Proof of Theorem~\ref{th:brack}}
The following proof relies directly on ideas in the proof of~\cite[Thm.~2.4.1]{van1996weak}.

\begin{proofof}{Theorem~\ref{th:brack}}
Fix any~\(\epsilon>0\).
Choose finitely many brackets \((\underline{\phi}_{\epsilon,i},\overline{\phi}_{\epsilon,i})\)\footnote{Brackets are defined in \cref{def:brack}.}, with \(i\) a positive integer index, such that their union contains \(\Phi\) and \(\E^P\group{\overline{\phi}_{\epsilon,i}-\underline{\phi}_{\epsilon,i}}<\epsilon\).
Then, for every \(\phi \in \Phi\), there is a bracket \((\underline{\phi}_{\epsilon,i},\overline{\phi}_{\epsilon,i})\) such that for every positive integer~\(n\)
\begin{equation*}
\hat{\E}^P_n(\phi)-\E^P(\phi) 
\leqslant\hat{\E}^P_n(\overline{\phi}_{\epsilon,i})-\E^P(\overline{\phi}_{\epsilon,i})+\E^P(\overline{\phi}_{\epsilon,i}-\phi) 
\leqslant\hat{\E}^P_n(\overline{\phi}_{\epsilon,i})-\E^P(\overline{\phi}_{\epsilon,i})+\epsilon.
\end{equation*}
As a consequence,
\begin{equation*}
\sup_{\phi\in\Phi}\group*{\hat{\E}^P_n(\phi)-\E^P(\phi)} 
\leqslant\max_i \group*{\hat{\E}^P_n(\overline{\phi}_{\epsilon,i})-\E^P(\overline{\phi}_{\epsilon,i})}+\epsilon 
\leqslant\max_i\abs*{\hat{\E}^P_n(\overline{\phi}_{\epsilon,i})-\E^P(\overline{\phi}_{\epsilon,i})}+\epsilon.
\end{equation*}
By Corollary~\ref{lem:8}, the right-hand side converges to \(\epsilon\).
Using a similar argument we find that
\begin{equation*}
\hat{\E}^P_n(\phi)-\E^P(\phi)
\geqslant \hat{\E}^P_n(\underline{\phi}_{\epsilon,i})-\E^P(\underline{\phi}_{\epsilon,i})-\E^P(\phi-\underline{\phi}_{\epsilon,i}) 
\geqslant \hat{\E}^P_n(\overline{\phi}_{\epsilon,i})-\E^P(\overline{\phi}_{\epsilon,i})-\epsilon
\end{equation*}
and therefore
\begin{equation*}
\inf_{\phi\in\Phi}\group*{\hat{\E}^P_n(\phi)-\E^P(\phi)}
\geqslant\min_i\group*{ \hat{\E}^P_n(\underline{\phi}_{\epsilon,i})-\E^P(\underline{\phi}_{\epsilon,i})}-\epsilon 
\geqslant-\max_i\abs*{ \hat{\E}^P_n(\underline{\phi}_{\epsilon,i})-\E^P(\underline{\phi}_{\epsilon,i})}-\epsilon.
\end{equation*}
The right-hand side converges to \(-\epsilon\) by the same corollary.
Taken together, these two arguments imply that \(\limsup_n \sup_{\phi\in\Phi}\abs*{\hat{\E}^P_n(\phi)-\E^P(\phi)}^*\leqslant \epsilon\) almost surely, for every \(\epsilon>0\).
Hence the \(\limsup\) must actually be zero almost surely.
\end{proofof}

\subsubsection{Proof of Proposition~\ref{prop:finiT}}

\begin{proofof}{Proposition~\ref{prop:finiT}}
Consider the sets \(S_t\coloneqq\cset{\omega\in\reals^\infty}{\lim_{n\to\infty}\hat{\E}_n^{P}(f_t)(\omega)=\E^{P}(f_t)}\), \(t\in T\).
Each \(S_t\) has measure one by the strong law of large numbers, and so does, therefore, their finite intersection $S\coloneqq\bigcap_{t\in T}S_t$.
Fix any \(\omega\) in \(S\).
For any \(\epsilon>0\) and \(t\in T\) there is some natural \(N_{\epsilon,t}\) such that \(\abs{\hat{\E}_n^{P}(f_t)(\omega)-\E^{P}(f_t)}<\epsilon\) for all \(n\geq N_{\epsilon,t}\), and therefore
\begin{equation*}
\abs[\Big]{\hat{\Eo}_n^{\Pee}(f)(\omega)-\Eo^{\Pee}(f)}
=\abs[\bigg]{\min_{t\in T}\hat{\E}_n^{P}(f_t)(\omega)-\min_{t\in T}\E^{P}(f_t)}
\leq\max_{t\in T}\abs[\Big]{\hat{\E}_n^{P}(f_t)(\omega)-\E^{P}(f_t)}<\epsilon
\text{ for all \(n\geq N_\epsilon\coloneqq\max_{t\in T}N_{\epsilon,t}\)},
\end{equation*}
where the first inequality follows from Lemma~\ref{lem:supinf}.
This tells us that \(\hat{\Eo}_n^{\Pee}(f)(\omega)\to\Eo^{\Pee}(f)\) for all \(\omega\in S\), so, indeed, \(\Eoh_n^{\Pee}(f)\) is a strongly consistent estimator, because \(S\) has measure one.
\end{proofof}
Notice how we proved the following corollary in the previous proof:
\begin{corollary}\label{lem:8}
Suppose that \(T\) is finite, then 
\(
\lim_{n\to\infty}\max_{t\in T}\abs[\Big]{\hat{\E}_n^{P}(f_t)-\E^{P}(f_t)}=0 \text{ almost surely.}
\)
\end{corollary}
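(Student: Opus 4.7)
The plan is to derive this directly from the classical strong law of large numbers, exploiting the fact that finite intersections of full-measure events still have full measure. Since $f$ is assumed to be $\Pee$-integrable, and since by construction $\E^P(\abs{f_t})=\E^{P_t}(\abs{f})<+\infty$ for every $t\in T$, each $f_t$ is $P$-integrable. Hence, for every fixed $t\in T$, the classical strong law of large numbers applied on the product space $(\reals^{\infty},\borels^{\infty},P^{\infty})$ tells us that the event
\begin{equation*}
S_t\coloneqq\cset[\big]{\omega\in\reals^\infty}{\lim_{n\to\infty}\hat{\E}_n^{P}(f_t)(\omega)=\E^{P}(f_t)}
\end{equation*}
has $P^\infty$-measure one.

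Next, I would intersect these sets over $t\in T$. Because $T$ is finite, the intersection $S\coloneqq\bigcap_{t\in T}S_t$ remains a finite intersection of sets of measure one, and therefore still has measure one. On $S$ I get simultaneous pointwise convergence for all $t\in T$, and upgrading this to uniform (i.e.~max) convergence is then routine because $T$ is finite: for any $\omega\in S$ and any $\epsilon>0$, I can pick, for each $t\in T$, an index $N_{\epsilon,t}(\omega)$ beyond which $\abs{\hat{\E}_n^{P}(f_t)(\omega)-\E^{P}(f_t)}<\epsilon$, and then take $N_\epsilon(\omega)\coloneqq\max_{t\in T}N_{\epsilon,t}(\omega)$, which is well-defined precisely because $T$ is finite. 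For all $n\geqslant N_\epsilon(\omega)$ we then have $\max_{t\in T}\abs{\hat{\E}_n^{P}(f_t)(\omega)-\E^{P}(f_t)}<\epsilon$, and since $\epsilon$ was arbitrary, the max converges to zero on $S$.

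There is essentially no obstacle here: the argument is purely a matter of chaining the strong law of large numbers with the trivial observation that a finite maximum of convergent sequences converges to the maximum of the limits. The only subtlety worth flagging explicitly is why finiteness of $T$ matters, namely that it is required both to keep the intersection $\bigcap_{t\in T}S_t$ of full measure (a countable intersection would also suffice, but an uncountable one need not) and to guarantee that $\max_{t\in T}N_{\epsilon,t}(\omega)$ is a finite natural number for every $\omega\in S$. In fact, as the paper already notes, the argument has effectively been carried out inside the proof of Proposition~\ref{prop:finiT}, so it is enough to extract the relevant lines and present them as a standalone statement.
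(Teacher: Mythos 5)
Your proof is correct and is essentially identical to the paper's own argument: the paper proves the corollary as a by-product of the proof of Proposition~\ref{prop:finiT}, using the strong law of large numbers for each fixed $t$, the finite intersection of the full-measure events $S_t$, and the finite maximum of the indices $N_{\epsilon,t}$, exactly as you do. Your additional remark checking $P$-integrability of each $f_t$ just makes explicit a standing assumption the paper already records in Section~\ref{sec:estimator:properties}.
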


\subsubsection{Proof of Theorem~\ref{th:lipschitz}}
The following proof relies directly on ideas in the proof of~\cite[Thm.~2.7.11]{van1996weak}.

\begin{proofof}{Theorem~\ref{th:lipschitz}}
It follows from the definition of \(N(\epsilon,T,d)\) that there are \(N(\epsilon,T,d)\) \(\epsilon\)-balls in \(T\), whose centres we will denote by \(t_k\) for \(k\in\vset{1,\dots,N(\epsilon,T,d)}\), that cover the set \(T\) for \(d\).
We are done if we can prove that the corresponding brackets \([f_{t_k}-\epsilon F,f_{t_k}+\epsilon F]\) for \(k\in\vset{1,\dots,N(\epsilon,T,d)}\) cover \(\FF\).
By assumption, we have that for any \(s\in T\) there is some \(k_s\in\vset{1,\dots,N(\epsilon,T,d)}\) such that \(s\) is contained in the \(\epsilon\)-ball with centre \(t_{k_s}\), implying that \(d(s,t_{k_s})<\epsilon\).
Because of the Lipschitz condition, we then have that for all \(x\in\reals\)
\begin{equation*}
\abs{f_s(x)-f_{t_{k_s}}(x)}\leqslant d(s,t_{k_s})F(x)\leqslant\epsilon F(x),
\end{equation*}
or equivalently
\begin{equation*}
f_{t_{k_s}}(x)-\epsilon F(x)\leqslant f_s(x)\leqslant f_{t_{k_s}}(x)+\epsilon F(x).
\end{equation*}
This means that \(f_s\) is in the \([f_{t_{k_s}}-\epsilon F,f_{t_{k_s}}+\epsilon F]\)-bracket, which is of size \(2\epsilon \norm{F}\).
\end{proofof}

\subsubsection{Proof of Theorem~\ref{th:liptodiff}}

We begin by proving, for the sake of completeness, a well-known lemma, which states that all norms on a finite-dimensional space are equivalent with the \(L_2\)-norm.

\begin{lemma}\label{lem:2norm}
For any integer \(m>0\) and any norm \(\norm{\cdot}\) and \(L_2\)-norm \(\norm{\cdot}_2\) on \(\reals^m\) there are constants \(a,b>0\) such that
\begin{equation*}
(\forall v\in\reals^m) a \norm{v}\leqslant\norm{v}_2\leqslant b \norm{v}
\end{equation*}
\end{lemma}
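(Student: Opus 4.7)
The plan is to exploit the fact that the standard basis representation gives a simple direct bound on $\norm{v}$ in terms of $\norm{v}_2$, and then to obtain the reverse bound by a compactness argument on the $L_2$-unit sphere. Throughout, let $e_1,\dots,e_m$ denote the standard basis of $\reals^m$ and write $v=\sum_{i=1}^m v_i e_i$ for any $v\in\reals^m$.

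The easy direction is the lower bound $a\norm{v}\leqslant\norm{v}_2$. First I would apply the triangle inequality and homogeneity of $\norm{\cdot}$ to get $\norm{v}\leqslant\sum_{i=1}^m\abs{v_i}\norm{e_i}$, and then use the Cauchy--Schwarz inequality on the right-hand side to deduce
\begin{equation*}
\norm{v}\leqslant\sqrt{\textstyle\sum_{i=1}^m v_i^2}\cdot\sqrt{\textstyle\sum_{i=1}^m\norm{e_i}^2}=C\norm{v}_2,
\end{equation*}
where $C\coloneqq\sqrt{\sum_{i=1}^m\norm{e_i}^2}>0$ is a constant depending only on the norm $\norm{\cdot}$ and the dimension $m$. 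This yields the lower bound with $a=1/C$.

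For the upper bound $\norm{v}_2\leqslant b\norm{v}$, I would first observe that the bound just obtained, combined with the reverse triangle inequality, shows that $v\mapsto\norm{v}$ is continuous on $(\reals^m,\norm{\cdot}_2)$, since $\abs{\norm{v}-\norm{w}}\leqslant\norm{v-w}\leqslant C\norm{v-w}_2$. Next, I would restrict $\norm{\cdot}$ to the unit sphere $S\coloneqq\cset{v\in\reals^m}{\norm{v}_2=1}$, which is compact with respect to $\norm{\cdot}_2$ by the Heine--Borel theorem. By the extreme value theorem, the continuous map $\norm{\cdot}$ attains its infimum $c$ on $S$, and this infimum must be strictly positive because $\norm{v}=0$ would force $v=0\notin S$.

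To conclude, I would use homogeneity to lift this from $S$ to all of $\reals^m$: for any $v\neq 0$, the vector $v/\norm{v}_2$ lies in $S$, so $\norm{v/\norm{v}_2}\geqslant c$, which rearranges to $\norm{v}_2\leqslant\frac{1}{c}\norm{v}$; the case $v=0$ is trivial. Setting $b=1/c$ then completes the proof. The main conceptual step is the compactness argument together with the positivity of $c$, while the only mild subtlety is ensuring continuity of $\norm{\cdot}$ with respect to $\norm{\cdot}_2$ before invoking the extreme value theorem, but this follows cleanly from the first inequality established.
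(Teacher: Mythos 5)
Your proof is correct and follows essentially the same route as the paper: Cauchy--Schwarz on the basis expansion for the easy inequality, then continuity of $\norm{\cdot}$ with respect to $\norm{\cdot}_2$ (via the reverse triangle inequality and the first bound), compactness of the $L_2$-unit sphere, the extreme value theorem, and homogeneity to conclude. The only cosmetic difference is that you fix the standard basis whereas the paper works with an arbitrary basis; you also spell out why the minimum $c$ on the sphere is strictly positive, which the paper leaves implicit.
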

\begin{proof}
Let \(\vset{e_1,e_2,\ldots,e_m}\) be any basis for \(\reals^m\). 
Then for any \(v\in\reals^m\) there exist constants \(x_1,x_2,\ldots,x_m\in\reals\) such that \(v=\sum_{k=1}^m x_k e_k\).
By the triangle inequality and the scaling property of norms
\begin{equation*}
\norm{v}\leqslant \sum_{k=1}^m \abs{x_k} \norm{e_k}.
\end{equation*}
We can view this sum as an inner product and apply the Cauchy-Schwarz inequality to find that
\begin{equation*}
\sum_{k=1}^m\abs{x_k}\norm{e_k}\leqslant\group[\bigg]{\sum_{k=1}^m x_k^2}^{\frac12}\group[\bigg]{\sum_{k=1}^m \norm{e_k}^2}^{\frac12}.
\end{equation*}
If we now let \(a\coloneqq\group*{\sum_{k=1}^m\norm{e_k}^2}^{-\frac12}\), we get that, indeed
\begin{equation}\label{eq:linkerkant}
a \norm{v}\leqslant \norm{v}_2.
\end{equation}

For the other inequality, first observe that if \(v=0\) then the theorem is true for any \(b>0\), so we can ignore this case. 
We endow \(R^m\) with the topology induced by the \(L_2\)-norm.
We first prove that the map \(r\colon\reals^m \to\reals_{\geqslant0}\colon v \mapsto \norm{v}\) is continuous for this topology, meaning that for all \(v \in\reals^m\) and for any \(\epsilon>0\) there is some \(\delta>0\) such that for all \(v_1\in\reals^m\) for which \(\norm{v-v_1}_2<\delta\) it holds that \(\abs*{\norm{v}-\norm{v_1}}<\epsilon\).
Indeed, by the reverse triangle inequality and \cref{eq:linkerkant}, 
\begin{equation*}
\abs*{\norm{v}-\norm{v_1}}\leqslant \norm{v-v_1}\leqslant \frac1a \norm{v-v_1}_2,
\end{equation*}
so we can choose \(\delta\coloneqq a\epsilon\).
The set \(J\coloneqq\cset{v\in\reals^m}{\norm{v}_2=1}\) is compact since it is closed and bounded, so the continuous~\(r\) will achieve a minimum \(c>0\) on \(J\).
Hence, \(v/\norm{v}_2\geq c\) for all \(v\in\reals^m\setminus\vset{0}\), and if we therefore let \(b\coloneqq c^{-1}\), then indeed \(\norm{v}_2\leqslant b\norm{v}\).
\end{proof}

We are now ready to prove the theorem.

\begin{proofof}{\cref{th:liptodiff}}
If we start from the result of \cref{th:combo}, then it is enough to prove that the statements (i) that for all \(x\in\reals\) and \(s,t\in T\), \(\norm{\nabla_t f(x,t)}\leqslant F(x)\) and (ii) that \(\E^P(F)<+\infty\), imply the existence of some \(\tilde{F}\colon\reals\to\reals\) for which \(\abs{f(x,s)-f(x,t)}\leqslant\norm{s-t} \tilde{F}(x)\) and \(\E^P(\tilde{F})<+\infty\).
By the fundamental theorem of calculus and the definition of the directional derivative, we can write 
\begin{equation*}
f(x,s)-f(x,t)
=\int_0^1\frac{\mathrm{d}}{\mathrm{d}r}f(x,t+r(s-t))\,\mathrm{d}r
=\int_0^1\nabla_uf(x,u)\big|_{u=t+r(s-t)}\cdot(s-t)\,\mathrm{d}r,
\end{equation*}
where \(\nabla_u\) represents the gradient with respect to \(u \in T_c\).
To bound the absolute value we use the Cauchy-Schwartz inequality:
\begin{align*}
\abs{f(x,s)-f(x,t)}
&=\abs[\bigg]{\int_0^1\nabla_uf(x,u)\big|_{u=t+r(s-t)}\cdot(s-t)\,\mathrm{d}r}
\leqslant\int_0^1\abs[\Big]{\nabla_uf(x,u)\big|_{u=t+r(s-t)}\cdot(s-t)}\,\mathrm{d}r\\
&\leqslant\int_0^1\norm[\Big]{\nabla_uf(x,u)\big|_{u=t+r(s-t)} }_2\norm{s-t}_2\,\mathrm{d}r.
\end{align*}
By Lemma~\ref{lem:2norm}, there is some \(b>0\) such that 
\begin{equation*}
\norm[\Big]{\nabla_uf(x,u)\big|_{u=t+r(s-t)} }_2 
\leqslant b\norm[\Big]{\nabla_u f(x,u)\big|_{u=t+r(s-t)} } 
\leqslant b F(x),
\end{equation*}
where we have also used (i).
Similarly, \(\norm{s-t}_2 \leqslant b \norm{s-t}\), and therefore
\begin{equation*}
\abs{f(x,s)-f(x,t)}\leqslant\int_0^1 b^2 F(x)\norm{s-t}\,\mathrm{d}r
=b^2 F(x)\norm{s-t},
\end{equation*}
If we let \(\tilde{F}\coloneqq b^2 F\) then \(\abs{f(x,s)-f(x,t)}\leqslant\norm{s-t}\tilde{F}(x)\) and \(\E^P(\tilde{F})=b^2\E^P(F)<+\infty\).
\end{proofof}

\subsubsection{Proof of Theorem~\ref{th:compacT}}

\begin{proofof}{\cref{th:compacT}}
Fix any real \(\epsilon>0\).
Let \(g_1(X_1)\coloneqq\sup_{t\in T}\abs*{f_t(X_1)}\).
The (Lebesque) integral associated with \(\E^{P}(g_1)\) can be written as
\begin{equation*}
\E^{P}\group*{g_1}=\int_{\reals}g_1(x)\mathrm{d}P_{X_1}(x),
\end{equation*}
where \(P_{X_1}\) is the probability distribution of the random variable \(X_1\).
The monotone convergence theorem now guarantees that
\begin{equation*}
\E^{P}\group*{g_1}=\lim_{c\to+\infty}\int_{\reals}\mathbb{I}_{[-c,c]}(x)g_1(x)\mathrm{d}P_{X_1}(x)\eqqcolon\int_{-c}^cg_1(x)\mathrm{d}P_{X_1}(x),
\end{equation*}
so there is some \(C>0\) such that
\begin{equation*}
\abs*{\int_{-C}^{C}g_1(x)\mathrm{d}P_{X_1}(x)-\E^{P}\group*{g_1}}=\int_{(-\infty,-C]\cup[C,+\infty)}g_1(x)\mathrm{d}P_{X_1}(x)<\frac{\epsilon}4.
\end{equation*}

Since \(f_t(x)\) is continuously differentiable in \((x,t)\) on an open set that contains the compact set \([-C,C]\times T\), the norm of the gradient will be bounded in \([-C,C]\times T\). 
Because this norm of the gradient is bounded in this set, \(f_t(x)\) will also be Lipschitz continuous on this set, meaning that there is some constant \(K\) for which
\begin{equation*}
\abs*{f_{t_1}(x_1)-f_{t_2}(x_2)}\leqslant K\norm{(x_1-x_2,t_1-t_2)}_2
\text{ for all \(x_1,x_2\in[-C,C]\) and \(t_1,t_2\in T\)}.
\end{equation*}
If we choose \(x=x_1=x_2\), then we find in particular that
\begin{equation*}
\abs*{f_{t_1}(x)-f_{t_2}(x)}\leqslant K\norm{t_1-t_2}_2 
\text{ for all \(x\in[-C,C]\) and \(t_1,t_2\in T\)}.
\end{equation*}

Let \(\delta\coloneqq\frac{\epsilon}{4KP_{X_1}([-C,C])}\) and define open balls in \(T\) with centre \(t\in T\) as \(B_\delta(t)\coloneqq \cset{t_1\in T}{\norm{t_1-t}_2<\delta}\).
Consider the cover \(\cset{B_\delta(t)}{t\in T}\) of \(T\) with \(\delta\)-balls for every \(t\in T\).
Because \(T\) is compact, there is a finite sub-cover, so there is some finite set of centres \(\tau_\delta\subseteq T\) such that \(T\subseteq\bigcup_{t_c\in\tau_\delta}B_\delta(t_c)\).

For any \(t_c\in\tau\), we now define
\begin{equation*}
\underline{\phi[t_c]}_{\epsilon}\colon\reals\to\reals\colon x\mapsto
\inf_{t\in B_\delta(t_c)} f_t(x)
\end{equation*}
and
\begin{equation*}
\overline{\phi[t_c]}_{\epsilon}\colon\reals\to\reals\colon x\mapsto
\sup_{t\in B_\delta(t_c)} f_t(x).
\end{equation*}
By construction, all maps in \(\cset{f_t}{t\in B_\delta(t_c)}\) are inside the bracket \((\underline{\phi[t_c]}_\epsilon,\overline{\phi[t_c]}_\epsilon)\).
Hence, since  \(T\subseteq\bigcup_{t_c\in \tau} B_\delta(t_c)\), each~\(f_t\) belongs to at least one bracket.
Now observe that
\begin{equation*}
\E^P\group*{\overline{\phi[t_c]}_{\epsilon}-\underline{\phi[t_c]}_{\epsilon}} 
=\int_\reals\group[\bigg]{\sup_{t\in B_\delta(t_c)} f_t(x)-\inf_{t\in B_\delta(t_c)} f_t(x)}\mathrm{d}P_{X_1}(x)
=\int_\reals \sup_{s,t\in B_\delta(t_c)}\abs*{ f_t(x)-f_s(x)}\mathrm{d}P_{X_1}(x),
\end{equation*}
where we can take the absolute value on the right-hand side since the supremum of the difference will certainly be positive, because, by symmetry, we could swap \(s\) and \(t\) to flip the sign.
Splitting off the tails leads to:
\begin{equation*}
\int_\reals \sup_{s,t\in B_\delta(t_c)}\abs*{ f_t(x)-f_s(x)}\mathrm{d}P_{X_1}(x) 
\leqslant\int_{-C}^C\sup_{s,t\in B_\delta(t_c)}\abs*{ f_t(x)-f_s(x)}\mathrm{d}P_{X_1}(x)+\int_{(-\infty,-C]\cup[C,+\infty)}\sup_{s,t\in B_\delta(t_c)}\abs*{ f_t(x)-f_s(x)}\mathrm{d}P_{X_1}(x).
\end{equation*}
We will now bound both terms.
For the first term, we get:
\begin{equation*}
\int_{-C}^C \sup_{s,t\in B_\delta(t_c)}\abs*{ f_t(x)-f_s(x)}\mathrm{d}P_{X_1}(x) 
\leqslant K\sup_{s,t\in B_\delta(t_c)}\norm{s-t}_2\int_{-C}^C\mathrm{d}P_{X_1}(x) 
\leqslant 2 K \delta P_{X_1}([-C,C])=\frac{\epsilon}{2}.
\end{equation*}
The second term can be bounded using the results in the first part of the proof.
By the triangle inequality we obtain
\begin{align*}
\int_{(-\infty,-C]\cup[C,+\infty)} \sup_{s,t\in B_\delta(t_c)}\abs*{ f_t(x)-f_s(x)}\mathrm{d}P_{X_1}(x) 
&\leqslant 2 \int_{(-\infty,-C]\cup[C,+\infty)} \sup_{t\in B_\delta(t_c)}\abs*{ f_t(x)} \mathrm{d}P_{X_1}(x)\\
&\leqslant 2 \int_{(-\infty,-C]\cup[C,+\infty)} \sup_{t\in T}\abs*{ f_t(x)} \mathrm{d}P_{X_1}(x) < \frac{\epsilon}{2}.
\end{align*}
We find that \(\E^P(\overline{\phi[t_c]}_\epsilon-\underline{\phi[t_c]}_\epsilon)<\epsilon\). 
We conclude that for every real~\(\epsilon>0\), we can construct a finite set of brackets that satisfies the conditions of \cref{th:brack}. 
\cref{th:brack} therefore guarantees the (strong) consistency of the estimator.
\end{proofof}

\subsubsection{Proof of Theorem~\ref{th:comboimp}}

\begin{proofof}{\cref{th:comboimp}}
Consider the Borel measurable map~\(\tilde{F}\colon\reals\to\reals\) defined by
\begin{equation*}
\tilde{F}(x)\coloneqq
\begin{cases}
F(x)\frac{\abs{f(x)}}{p(x)}
&\text{ if \(p(x)>0\)}\\
0
&\text{ if \(p(x)=0\)}
\end{cases}
\text{ for all \(x\in\reals\)}.
\end{equation*}
We extend the domain of the \(f_t\) in \cref{eq:functions:for:importance:sampling} by letting
\begin{equation*}
f_t(x)\coloneqq
\begin{cases}
f(x)\frac{p_t(x)}{p(x)}
&\text{ if \(p(x)>0\)}\\
0
&\text{ if \(p(x)=0\)}
\end{cases}
\text{ for all \(x\in\reals\)}.
\end{equation*}
It follows from the assumptions that
\begin{equation*}
\abs{p_s(x)-p_t(x)}\leqslant\norm{s-t}F(x)\text{ for all \(s,t\in T\) and \(x\in\reals\),}
\end{equation*}
and if we multiply both sides by \(\frac{\abs*{f(x)}}{p(x)}\), then we get for all \(s,t\in T\) that
\begin{equation*}
\frac{\abs*{f(x)}}{p(x)}\abs{p_s(x)-p_t(x)}\leqslant\norm{s-t}\tilde{F}(x)
\text{ for all real \(x\) such that \(p(x)>0\)},
\end{equation*}
and taking into account the definitions of \(\tilde{F}\), \(f_s\) and \(f_t\) above, this leads to
\begin{equation*}
\abs{f_s(x)-f_t(x)}\leqslant\norm{s-t}\tilde{F}(x)\text{ for all \(s,t\in T\) and \(x\in\reals\)}.
\end{equation*}
Furthermore, it follows from the assumptions that
\begin{equation*}
\E^P(\tilde{F})
=\int_\reals \tilde{F}(x) p(x)\dx
\leqslant\int_\reals \abs{f(x)} F(x)\dx 
<+\infty.
\end{equation*}
These last two inequalities show that \(\tilde{F}\) is a map that satisfies the requirements of the map~\(F\) of \cref{th:combo}.
Applying this theorem yields the required result.
\end{proofof}

\subsubsection{Proof of Theorem~\ref{th:liptodiffimp}}

\begin{proofof}{\cref{th:liptodiffimp}}
Consider the Borel measurable map~\(\tilde{F}\colon\reals\to\reals\) defined by
\begin{equation*}
\tilde{F}(x)\coloneqq
\begin{cases}
F(x)\frac{\abs{f(x)}}{p(x)}
&\text{ if \(p(x)>0\)}\\
0
&\text{ if \(p(x)=0\)}
\end{cases}
\text{ for all \(x\in\reals\)}.
\end{equation*}
We want to apply \cref{eq:functions:for:importance:sampling2}, and to this end define \(f_{T_c}\) as
\begin{equation*}
f_{T_c}(x)\coloneqq
\begin{cases}
f(x)\frac{p_{T_c}(x)}{p(x)}
&\text{ if \(p(x)>0\)}\\
0
&\text{ if \(p(x)=0\)}
\end{cases}
\text{ for all \(x\in\reals\)}.
\end{equation*}
It follows from the assumptions that

\begin{equation*}
\norm{\nabla_t p_{T_c}(x,t)}\leqslant F(x)\text{ for all \(x\in\reals\) and \(t\in T_c\)},
\end{equation*} 
so, if we multiply both sides of this inequality by \(\frac{\abs*{f(x)}}{p(x)}\), we get
\begin{equation*}
\norm{\nabla_t f_{T_c}(x,t)}_2
=\frac{\abs*{f(x)}}{p(x)}\norm{\nabla_t p_{T_c}(x,t)}_2\leqslant\tilde{F}(x)
\text{ for all \(t\in T_c\) and all real \(x\) for which \(p(x)>0\)},
\end{equation*}
and, taking into account the definitions of \(f_{T_c}\) and \(\tilde{F}\) given above, this finally leads to 
\begin{equation*}
\norm{\nabla_t f_{T_c}(x,t)}_2\leqslant\tilde{F}(x)
\text{ for all \(t\in T_c\) and all real \(x\)}.
\end{equation*}
Furthermore,
\begin{equation*}
\E^P(\tilde{F})
=\int_\reals \tilde{F}(x) p(x)\dx
\leqslant\int_\reals \abs{f(x)} F(x)\dx 
<+\infty.
\end{equation*}
These last two inequalities show that~\(\tilde{F}\) is a map that satisfies the requirements of the map~\(F\) of \cref{th:liptodiff}.
Applying this theorem yields the required result.
\end{proofof}

\subsubsection{Proof of Theorem~\ref{th:compacTimp}}

\begin{proofof}{\cref{th:compacTimp}}
Fix any real~\(\epsilon>0\).
Since \(f\) is bounded, there is some \(M>0\) such that \((\forall x\in\reals)\abs*{f(x)}<M\).
We start from the integral condition in the assumptions:
\begin{equation*}
\int_{\reals}\sup_{t\in T}p_t(x)\dx<+\infty.
\end{equation*}
The monotone convergence theorem now guarantees that
\begin{equation*}
\int_{\reals}\sup_{t\in T}p_t(x)\dx=\lim_{c\to+\infty}\int_{\reals}\mathbb{I}_{[-c,c]}(x)\sup_{t\in T}p_t(x)\dx\eqqcolon\lim_{c\to+\infty}\int_{-c}^c \sup_{t\in T}p_t(x)\dx,
\end{equation*}
so there is some real~\(C>0\) such that
\begin{equation*}
\abs*{\int_{-C}^{C}\sup_{t\in T}p_t(x)\dx-\int_{\reals}\sup_{t\in T}p_t(x)\dx}
=\int_{(-\infty,-C]\cup[C,+\infty)}\sup_{t\in T}p_t(x)\dx 
<\frac{\epsilon}{4 M}.
\end{equation*}

Since \(p_{T_c}(x,t)\) is continuously differentiable in~\((x,t)\) on a set that includes \([-C,C]\times T\), a compact set, it will also be Lipschitz continuous on this set, meaning that there is a constant~\(K\) such that
\begin{equation*}
\abs*{p_{t_1}(x_1)-p_{t_2}(x_2)}\leqslant K\norm{(x_1-x_2,t_1-t_2)}_2
\text{ for all \(x_1,x_2\in[-C,C]\) and \(t_1,t_2\in T\)}.
\end{equation*}
If we choose \(x=x_1=x_2\), then we find in particular that
\begin{equation*}
\abs*{p_{t_1}(x)-p_{t_2}(x)}\leqslant K\norm{t_1-t_2}_2
\text{ for all \(x\in[-C,C]\) and \(t_1,t_2\in T\)}.
\end{equation*}

Let \(\delta\coloneqq \frac{\epsilon}{4 C K M}\) and define open balls in \(T\) with centre \(t\in T\) as \(B_\delta(t)\coloneqq \cset{t_1\in T}{\norm{t-t_1}_2 <\delta}\).
Consider the cover~\(\cset{B_\delta(t)}{t\in T}\) of~\(T\) with \(\delta\)-balls for every \(t\in T\).
Because \(T\) is compact, there is a finite sub-cover, so there is a finite set of centres \(\tau_\delta\subseteq T\) such that \(T\subseteq\bigcup_{t_c\in\tau_\delta}B_\delta(t_c)\).

For any \(t_c\in\tau_\delta\), we now define
\begin{equation*}
\underline{\phi[t_c]}_{\epsilon}\colon\reals\to\reals\colon x\mapsto
\inf_{t\in B_\delta(t_c)}f(x)\frac{p_t(x)}{p(x)}
\end{equation*}
and
\begin{equation*}
\overline{\phi[t_c]}_{\epsilon}\colon\reals\to\reals\colon x\mapsto
\sup_{t\in B_\delta(t_c)}f(x)\frac{p_t(x)}{p(x)}.
\end{equation*}
By construction, all maps in~\(\cset{f_t}{t\in B_\delta(t_c)}\) 
are inside the bracket~\((\underline{\phi[t_c]}_\epsilon,\overline{\phi[t_c]}_\epsilon)\).
Hence, since  \(T\subseteq\bigcup_{t_c\in \tau}B_\delta(t_c)\), each~\(f_t\) belongs to at least one bracket.
Now observe that
\begin{equation*}
\E^P \group*{\overline{\phi[t_c]}_{\epsilon}-\underline{\phi[t_c]}_{\epsilon}} 
=\int_\reals\group[\bigg]{\sup_{t\in B_\delta(t_c)}f(x)\frac{p_t(x)}{p(x)}-\inf_{t\in B_\delta(t_c)}f(x)\frac{p_t(x)}{p(x)}}p(x)\dx 
=\int_\reals \abs{f(x)}\sup_{s,t\in B_\delta(t_c)}\abs*{ p_t(x)-p_s(x)}\dx,
\end{equation*}
where we can take the absolute value on the right-hand side since the supremum of the difference will certainly be positive, because, by symmetry, we could swap \(s\) and \(t\) to flip the sign.
Splitting off the tails leads to:
\begin{equation*}
\int_\reals \abs{f(x)}\sup_{s,t\in B_\delta(t_c)}\abs*{ p_t(x)-p_s(x)}\dx \leqslant \int_{-C}^C \abs*{f(x)} \sup_{s,t\in B_\delta(t_c)}\abs*{ p_t(x)-p_s(x)}\dx + \int_{(-\infty,-C]\cup[C,+\infty)} \abs*{f(x)} \sup_{s,t\in B_\delta(t_c)}\abs*{ p_t(x)-p_s(x)}\dx.
\end{equation*}
We will bound both terms.
The first term  can be bounded by
\begin{equation*}
\int_{-C}^C\abs*{f(x)}\sup_{s,t\in B_\delta(t_c)}\abs*{ p_t(x)-p_s(x)}\dx 
\leqslant KM\sup_{s,t\in B_\delta(t_c)}\norm{s-t}_2\int_{-C}^C\dx 
\leqslant 2CKM\delta 
=\frac{\epsilon}{2}.
\end{equation*}
The second term can be bounded using the results in the first part of the proof.
By the triangle inequality we obtain
\begin{align*}
\int_{(-\infty,-C]\cup[C,+\infty)}\abs*{f(x)}\sup_{s,t\in B_\delta(t_c)}\abs*{p_t(x)-p_s(x)}\dx 
&\leqslant 2M\int_{(-\infty,-C]\cup[C,+\infty)}\sup_{t\in B_\delta(t_c)}\abs*{p_t(x)}\dx  
<\frac{\epsilon}{2}.
\end{align*}
Hence, \(\E^P(\overline{\phi[t_c]}_\epsilon-\underline{\phi[t_c]}_\epsilon)<\epsilon\). 
We conclude that for every real \(\epsilon>0\), we can construct a finite set of brackets that satisfies the conditions of \cref{th:brack}. 
\cref{th:brack} therefore guarantees consistency of the estimator.
\end{proofof}

\subsubsection{The map \texorpdfstring{\(F\)}{\it F} for the first example using Theorem~\ref{th:liptodiff}}
First of all, we let \(z\coloneqq\frac{x-\mu}{\sigma}\).
We have to find an upper bound \(F(x)\) for all \(\mu\) and \(\sigma\) for \(\norm*{\nabla_{(\mu,\sigma)} f_{(\mu,\sigma)}(x)}_2\).
We find that
\begin{align*}
\frac{\partial}{\partial\mu}f_{(\mu,\sigma)}(x) 
&=\frac{(x-\mu)}{\sigma^2}\mathbb{I}_{\cset{y}{g(y)<0}}(x) 
\frac{\sigma_o}{\sigma}\exp\group[\Big]{-\frac{(x-\mu)^2}{2\sigma^2}+\frac{(x-\mu_o)^2}{2\sigma_o^2}}
=\frac{z}{\sigma}f_{\mu,\sigma}(x)\\
\frac{\partial}{\partial\sigma}f_{(\mu,\sigma)}(x) 
&=\group[\Big]{\frac{(x-\mu)^2}{\sigma^3}-\frac1\sigma}\mathbb{I}_{\cset{y}{g(y)<0}}(x) 
\frac{\sigma_o}{\sigma}\exp\group[\Big]{-\frac{(x-\mu)^2}{2\sigma^2}+\frac{(x-\mu_o)^2}{2\sigma_o^2}}
=\frac{(z^2-1)}{\sigma}f_{\mu,\sigma}(x).
\end{align*}
Taking the norm results in
\begin{equation*}
\norm*{\nabla_{(\mu,\sigma)}f_{(\mu,\sigma)}(x)}_2
=\frac1{\sigma}\sqrt{z^2+(z^2-1)^2}f_{\mu,\sigma}(x)
=\frac1{\sigma}f_{\mu,\sigma}(x)\sqrt{z^4-z^2+1}.
\end{equation*}
Now we can bound each factor separately:
\begin{align*}
\frac1{\sigma}
&\leqslant\frac1{\sigmu}
\\
f_{\mu,\sigma}(x) 
&=\mathbb{I}_{\cset{y}{g(y)>0}}(x)
\frac{\sigma_o}{\sigma}
\exp\group[\Big]{-\frac{(x-\mu)^2}{2\sigma^2}+\frac{(x-\mu_o)^2}{2\sigma_o^2}}
\leqslant\frac{\sigma_o}{\sigmu}
\begin{cases}
\exp\group[\Big]{-\frac{(x-\muu)^2}{2\sigmo^2}+\frac{(x-\mu_o)^2}{2\sigma_o^2}}
&x<\muu\\
\exp\group[\Big]{\frac{(x-\mu_o)^2}{2\sigma_o^2}}
& \muu\leqslant x<\muo\\
\exp\group[\Big]{-\frac{(x-\muo)^2}{2\sigmo^2}+\frac{(x-\mu_o)^2}{2\sigma_o^2}}
&\muo\leqslant x
\end{cases}
\\
\sqrt{z^4-z^2+1} 
&\leqslant\sqrt{z^4+2z^2+1} 
=z^2+1
=\group[\Big]{\frac{x-\mu}{\sigma}}^2+1
\leqslant 
\begin{cases}
\frac{(x-\muo)^2}{\sigmu^2} +1
&x<\muu\\
\frac{(\muo-\muu)^2}{\sigmu^2}+1
&\muu\leqslant x<\muo\\
\frac{(x-\muu)^2}{\sigmu^2}+1
&\muo\leqslant x.
\end{cases}
\end{align*}
So we suggest
\begin{equation*}
F(x)
=\frac{\sigma_o}{\sigmu^2}
\begin{cases}
\group[\Big]{\frac{(x-\muo)^2}{\sigmu^2}+1}
\exp\group[\Big]{-\frac{(x-\muu)^2}{2\sigmo^2}+\frac{(x-\mu_o)^2}{2\sigma_o^2}}
&x<\muu\\
\group[\Big]{\frac{(\muo-\muu)^2}{\sigmu^2}+1}
\exp\group[\Big]{\frac{(x-\mu_o)^2}{2\sigma_o^2}}
&\muu\leqslant x<\muo\\
\group[\Big]{\frac{(x-\muu)^2}{\sigmu^2}+1}
\exp\group[\Big]{-\frac{(x-\muo)^2}{2\sigmo^2}+\frac{(x-\mu_o)^2}{2\sigma_o^2}}
&\muo\leqslant x.
\end{cases}
\end{equation*}
Now we check whether \(\E^{P_o}(\abs{F})<+\infty\).
\begin{align*}
\E^{P_o}(\abs{F})
&=\int_{\reals}\abs{F(x)}\frac1{\sqrt{2\pi}\sigma_o}
\exp\group[\bigg]{-\frac{(x-\mu_o)^2}{2\sigma_o^2}}\dx\\
&=\frac1{\sqrt{2\pi}\sigmu^2}
\vset[\bigg]{
\int_{-\infty}^{\muu}\group[\bigg]{\frac{(x-\muo)^2}{\sigmu^2}+1}
\exp\group[\bigg]{-\frac{(x-\muu)^2}{2\sigmo^2}}\dx
+\int_{\muu}^{\muo}\group[\bigg]{\frac{(\muo-\muu)^2}{\sigmu^2}+1}\dx\\
&\hspace{6cm}+\int_{\muo}^{+\infty}\group[\bigg]{\frac{(x-\muu)^2}{\sigmu^2}+1}
\exp\group[\bigg]{-\frac{(x-\muo)^2}{2\sigmo^2}}\dx
}\\
&\leqslant
\frac{\sigmo}{\sigmu^2}\E^{P_{(\muu,\sigmo)}}\group[\bigg]{\frac{(X-\muo)^2}{\sigmu^2}+1}
+\frac1{\sqrt{2\pi}\sigmu^2}\group[\bigg]{\frac{(\muo-\muu)^2}{\sigmu^2}+1}\group[\big]{\muo-\muu}
+\frac{\sigmo}{\sigmu^2}\E^{P_{(\muo,\sigmo)}}\group[\bigg]{\frac{(X-\muu)^2}{\sigmu^2}+1}.
\end{align*}
Since the expectations are taken over polynomials, and normal distributions have finite moments, we know that this will be finite.





\subsubsection{The map \texorpdfstring{\(F\)}{\it F} for the first example using Theorem~\ref{th:liptodiffimp}}
This derivation is similar to the previous derivation.
First of all, we let \(z\coloneqq\frac{x-\mu}{\sigma}\).
We have to find an upper bound \(F(x)\) for all \(\mu\) and \(\sigma\) for \(\norm*{\nabla_{(\mu,\sigma)} p_{(\mu,\sigma)}(x)}_2\).
We find that
\begin{align*}
\frac{\partial}{\partial\mu}p_{(\mu,\sigma)}(x) 
&=\frac{(x-\mu)}{\sigma^2}
\frac{1}{\sqrt{2\pi}\sigma}\exp\group[\Big]{-\frac{(x-\mu)^2}{2\sigma^2}}
=\frac{z}{\sigma}p_{\mu,\sigma}(x)\\
\frac{\partial}{\partial\sigma}p_{(\mu,\sigma)}(x) 
&=\group[\Big]{\frac{(x-\mu)^2}{\sigma^3}-\frac1\sigma}
\frac{1}{\sqrt{2\pi}\sigma}\exp\group[\Big]{-\frac{(x-\mu)^2}{2\sigma^2}}
=\frac{(z^2-1)}{\sigma}p_{\mu,\sigma}(x).
\end{align*}
Taking the norm results in
\begin{equation*}
\norm*{\nabla_{(\mu,\sigma)}p_{(\mu,\sigma)}(x)}_2
=\frac1{\sigma}\sqrt{z^2+(z^2-1)^2}p_{\mu,\sigma}(x)
=\frac1{\sigma}p_{\mu,\sigma}(x)\sqrt{z^4-z^2+1}.
\end{equation*}
Now we can bound each factor separately:
\begin{align*}
\frac1{\sigma}
&\leqslant\frac1{\sigmu}
\\
p_{\mu,\sigma}(x) 
&=
\frac{1}{\sqrt{2\pi}\sigma}
\exp\group[\Big]{-\frac{(x-\mu)^2}{2\sigma^2}}
\leqslant\frac{1}{\sqrt{2\pi}\sigmu}
\begin{cases}
\exp\group[\Big]{-\frac{(x-\muu)^2}{2\sigmo^2}}
&x<\muu\\
1
& \muu\leqslant x<\muo\\
\exp\group[\Big]{-\frac{(x-\muo)^2}{2\sigmo^2}}
&\muo\leqslant x
\end{cases}
\\
\sqrt{z^4-z^2+1} 
&\leqslant\sqrt{z^4+2z^2+1} 
=z^2+1
=\group[\Big]{\frac{x-\mu}{\sigma}}^2+1
\leqslant 
\begin{cases}
\frac{(x-\muo)^2}{\sigmu^2} +1
&x<\muu\\
\frac{(\muo-\muu)^2}{\sigmu^2}+1
&\muu\leqslant x<\muo\\
\frac{(x-\muu)^2}{\sigmu^2}+1
&\muo\leqslant x.
\end{cases}
\end{align*}
So we suggest
\begin{equation*}
F(x)
=\frac{1}{\sqrt{2\pi}\sigmu^2}
\begin{cases}
\group[\Big]{\frac{(x-\muo)^2}{\sigmu^2}+1}
\exp\group[\Big]{-\frac{(x-\muu)^2}{2\sigmo^2}}
&x<\muu\\
\frac{(\muo-\muu)^2}{\sigmu^2}+1
&\muu\leqslant x<\muo\\
\group[\Big]{\frac{(x-\muu)^2}{\sigmu^2}+1}
\exp\group[\Big]{-\frac{(x-\muo)^2}{2\sigmo^2}}
&\muo\leqslant x.
\end{cases}
\end{equation*}
Now we check whether \(\int_\reals \abs*{f(x)}F(x)\dx<+\infty\).
Since \(\abs{f(x)}\leqslant 1\), it is sufficient to prove that \(\int_\reals F(x)\dx<+\infty\).
\begin{align*}
\int_\reals \abs*{f(x)}F(x)
&\leqslant\frac1{\sqrt{2\pi}\sigmu^2}
\vset[\bigg]{
\int_{-\infty}^{\muu}\group[\bigg]{\frac{(x-\muo)^2}{\sigmu^2}+1}
\exp\group[\bigg]{-\frac{(x-\muu)^2}{2\sigmo^2}}\dx
+\int_{\muu}^{\muo}\group[\bigg]{\frac{(\muo-\muu)^2}{\sigmu^2}+1}\dx\\
&\hspace{6cm}+\int_{\muo}^{+\infty}\group[\bigg]{\frac{(x-\muu)^2}{\sigmu^2}+1}
\exp\group[\bigg]{-\frac{(x-\muo)^2}{2\sigmo^2}}\dx
}\\
&\leqslant
\frac{\sigmo}{\sigmu^2}\E^{P_{(\muu,\sigmo)}}\group[\bigg]{\frac{(X-\muo)^2}{\sigmu^2}+1}
+\frac1{\sqrt{2\pi}\sigmu^2}\group[\bigg]{\frac{(\muo-\muu)^2}{\sigmu^2}+1}\group[\big]{\muo-\muu}
+\frac{\sigmo}{\sigmu^2}\E^{P_{(\muo,\sigmo)}}\group[\bigg]{\frac{(X-\muu)^2}{\sigmu^2}+1}.
\end{align*}
Since the expectations are taken over polynomials, and normal distributions have finite moments, we know that this will be finite.

\subsubsection{The first example for a compact set of parameters using Theorem~\ref{th:compacTimp}}
\begin{align*}
\int_{\reals} \sup_{(\mu,\sigma) \in[\muo,\muu]\times[\sigmo,\sigmu]} p_{(\mu,\sigma)}(x)\dx &\leqslant 
\frac1{\sqrt{2\pi}\sigmu} \int_{\reals} \sup_{\mu \in [\muu,\muo]} e^{-\frac{(x-\mu)^2}{2\sigmo^2}}\dx\\
&=\frac1{\sqrt{2\pi}\sigmu} \group*{\int_{-\infty}^{\muu} e^{-\frac{(x-\muu)^2}{2\sigmo^2}}\dx+\int_{\muu}^{\muo} 1\dx + \int_{\muo}^{+\infty} e^{-\frac{(x-\muo)^2}{2\sigmo^2}}\dx}\\
&=\frac{1}{\sigmu}\group*{\sigmo+\frac1{\sqrt{2\pi}}(\muo-\muu)}<+\infty.
\end{align*}
}{}

\end{document}